\newcommand{\negeer}[1]{}
\def\notintp{\mathrel{/\kern-0.6em\rhd}} 
\newcommand{\qedsymbol}{$\dashv$}
\newcommand{\il}{{\ensuremath{\textup{\textbf{IL}}}}\xspace}
\newcommand{\extil}[1]{\ensuremath{\textup{\textbf{IL}}{\sf\ensuremath{#1}}}\xspace}
\newcommand{\intl}[1]{{\ensuremath {\textup{\textbf{IL}}}({\rm #1})}}
\newcommand{\ilm}{\extil{M}}
\newcommand{\ilp}{\extil{P}}
\newcommand{\ilw}{\extil{W}}
\newcommand{\ilr}{\extil{R}}
\newcommand{\ilwstar}{\extil{W^*}}
\newcommand{\ilrstar}{\extil{R^*}}
\newcommand{\ilwstarpenul}{\extil{P_0W^*}}
\newcommand{\ilal}{\intl{All}\xspace}
\newcommand{\formal}[1]{\ensuremath{{\sf {#1}}\xspace}}
\newcommand{\prop}{\formal{Prop}\xspace}
\newcommand{\principle}[1]{\formal{#1}}
\renewcommand{\principle}[1]{\formal{#1}}
\newcommand{\principel}[1]{\formal{#1}}
\newcommand{\eqbydef}{:=}
\newcommand{\formil}{\ensuremath{{\sf Form}_{\il}} \xspace}
\newcommand{\mo}{\principle{M_0}\xspace}
\newtheorem{theorem}{Theorem}[section]
\newtheorem{definition}[theorem]{Definition}
\newtheorem{lemma}[theorem]{Lemma}
\newtheorem{corollary}[theorem]{Corollary}
\title{%Materiae Modales in Logicis Interpretabilitatis
A New Principle in the Interpretability Logic of all Reasonable Arithmetical Theories}
\author{Evan Goris\\ and\\ Joost J. Joosten}
\date{2011}
\begin{document}
\maketitle

%\maketitle
\begin{abstract}
The interpretability logic of a mathematical theory describes the structural behavior of interpretations over that theory. Different theories have different logics. This paper revolves around the question what logic describes the behavior that is present in all theories with a minimum amount of arithmetic; the intersection over all such theories so to say. We denote this target logic by \ilal.

In this paper we present a  new principle \principle{R} in \ilal. We show that \principle{R} does not follow from the logic \ilwstarpenul that contains all previously known principles. This is done by providing a modal incompleteness proof of \ilwstarpenul : showing that \principle{R} follows semantically but not syntactically from \ilwstarpenul. Apart from giving the incompleteness proof by elementary methods, we also sketch how to work with so-called Generalized Veltman Semantics as to establish incompleteness. To this extent, a new version of this Generalized Veltman Semantics is defined and studied. Moreover, for the important principles the frame correspondences are calculated.

After the modal results it is shown that the new principle \principle{R} is indeed valid in any arithmetically theory. The proof employs some elementary results on definable cuts in arithmetical theories.
\end{abstract}

%\newpage
%\tableofcontents
%\newpage

\section{Introduction}
Interpretations of one theory or structure into another are omnipresent in (meta-) mathematical practice. Interpretability logics study the structural behavior of interpretations. Below we shall provide precise definitions. The structural behavior of interpretations is different for different kind of theories thus yielding different interpretability logics.

For example, for finitely axiomatized theories, the corresponding logic turned out to be \ilp as defined below. For theories like Peano Arithmetic with full induction\footnote{Technically speaking the property of so-called essential reflexivity is sufficient. A theory is essentially reflexive if any of its finite extensions proves the consistency of any finite sub-theory thereof. } the interpretability logic is \ilm. It is a long standing open problem what the core-structural behavior of interpretations is. That is, what is the interpretability logic that is valid in any sort of theory. It turns out that ``any sort of theory'' is a bit too ample a quantification. As we shall see below, we at least need some core amount of arithmetic as to do coding of syntax. Therefore the question is often paraphrased as: What is the interpretability logic of all reasonable arithmetical theories? We denote this logic by \ilal.

This paper revolves around this question and presents some major results. In \cite{joo:prol00} a conjecture was posed that $\ilal = \extil{W^*P_0}$. In this paper we refute this conjecture by exposing a new arithmetically valid principle.
We shall prove the modal incompleteness of the logic
\ilwstarpenul by introducing a new principle \principel{R}. Next, we show that the principle \principel{R} follows semantically from \ilwstarpenul but is not provable in \ilwstarpenul. 

We shall expose two proof methods here to prove that 
\principel{R} does not follow syntactically from \ilwstarpenul. 
The first, in Section \ref{subs:generalizedinmain}, develops some general theory for proving incompleteness via so-called \emph{Generalized Veltman Semantics}. However, it will turn out that the frame condition for the principle $W$ is so ghastly and cumbersome that however possible to work with, proofs become too involved. 

The second proof method,
in Section \ref{subs:incompletepenulwestar}, uses the regular Veltman semantics and some sort of bisimulation argument and a full proof is given that \principel{R} does not follow syntactically from \ilwstarpenul. 

To conclude, we shall prove in Section \ref{subs:risvetsound} that actually \principel{R} is sound in any reasonable arithmetical theory. In particular, this implies that \ilal can not be \ilwstarpenul.

We found the principle \principel{R} by trying to formulate a sufficient condition for the logic \ilwstarpenul to be modally complete. We think that this illustrates nicely that a modal formulation of an arithmetical phenomenon can be very useful to obtain new arithmetical results.

\section{Preliminaries}

This paper is the third and final in a series of three. All definitions and motivations behind the definitions were already included in \cite{jogo:mm08}. For completeness and readability we include the main definitions and issues also in this paper. 

\subsection{Arithmetic and an upper bound to \ilal}

As with (almost) all interesting occurrences of modal logic, 
interpretability logics are used to study a hard mathematical notion.
Interpretability logics, as their name slightly suggests, are 
used to study the notion of formal interpretability. In this subsection
we shall very briefly say what this notion is and how modal logic is 
used to study it.

We are interested in first order theories in the language of arithmetic. All 
theories we will consider will thus be arithmetical theories. 
Moreover, we want our theories to have a certain minimal strength.
That is, they should contain a certain core theory, say 
${\sf I}\Delta_{0} + \Omega_1$ from \cite{HP}. This will allow
us to do reasonable coding of syntax. We call these theories reasonable
arithmetical theories.

Once we can code syntax, we can write down a decidable predicate 
${\sf Proof}_T(p,\varphi)$ that holds on the standard model
precisely when $p$ is a $T$-proof of $\varphi$.\footnote{We take
the liberty to not make a distinction between a syntactical object
and its code.} We get a provability predicate by quantifying
existentially, that is, 
${\sf Prov}_T (\varphi ) \eqbydef \exists p \ {\sf Proof}_T (p, \varphi )$.

We can use these coding techniques to code the notion of
formal interpretability too.
Roughly, a theory $U$ interprets a theory $V$ if there is some
sort of translation so that every theorem of $V$ is under that
translation also a theorem of $U$.

\begin{definition}
Let $U$ and $V$ be reasonable arithmetical theories.
An interpretation $j$ from $V$ in $U$ is a pair
$\langle \delta , F \rangle$. Here, $\delta$ is called a 
domain specifier. It is a formula with one free variable.
The $F$ is a map that sends an $n$-ary relation symbol of
$V$ to a formula of $U$ with $n$ free variables. 
(We treat functions and constants as relations with additional properties.)
The interpretation $j$ induces a translation from formulas 
$\varphi$ of $V$ to
formulas $\varphi^j$ of $U$ by replacing relation symbols by their
corresponding formulas and by relativizing quantifiers to $\delta$.
We have the following requirements.
\begin{itemize}
\item
$(R (\vec{x}))^j = F(R) (\vec{x})$

\item
The translation induced by $j$ commutes with the boolean 
connectives. Thus, for example,
$(\varphi \vee \psi)^j = \varphi^j \vee \psi^j$.
In particular
$(\bot)^j=(\vee_{\varnothing})^j = \vee_{\varnothing}=\bot$

\item
$(\forall x \ \varphi)^j= \forall x \ (\delta (x) \rightarrow \varphi^j)$

\item
$V\vdash \varphi \Rightarrow U\vdash \varphi^j$

\end{itemize}
We say that $V$ is interpretable in $U$ if there exists an interpretation
$j$ of $V$ in $U$.
\end{definition}

Using the ${\sf Prov}_T (\varphi)$ predicate, it is possible
to code the notion of formal interpretability in arithmetical
theories. This gives rise to a formula ${\sf Int}_T (\varphi , \psi )$,
to hold on the standard model precisely when $T+ \psi$
is interpretable in $T+\varphi$. This formula is related to the
modal part by means of arithmetical realizations.
\medskip

The modal language of interpretability logics is the same as that of provability logics but now augmented by a binary modality $\rhd$ to denote interpretability.Thus, we define the interpretability formulas as 
\[
\formil \eqbydef \bot \mid \prop \mid (\formil \rightarrow \formil) \mid (\Box \formil)
\mid (\formil \rhd \formil) 
\]

Here \prop is a countable set of propositional 
variables $p,q,r,s,t,p_0,p_1,\ldots$.
We employ the usual definitions of the logical operators
$\neg , \vee , \wedge$ and $\leftrightarrow$. Also shall we write 
$\Diamond \varphi$ for $\neg \Box \neg \varphi$. We refer the reader to \cite{jogo:mm08} for more details and standard reading conventions.
\medskip

Now we can define the link between the modal language and the arithmetical counterpart. 
\begin{definition}
An arithmetical realization $*$ is a mapping that assigns
to each propositional variable an arithmetical sentence. This mapping
is extended to all modal formulas in the following way.
\begin{itemize}
\item[-]
$(\varphi \vee \psi)^* = \varphi^* \vee \psi^*$ and likewise for 
other boolean connectives. In particular
$\bot^* = (\vee_{\varnothing})^*=\vee_{\varnothing}=\bot$.

\item[-]
$(\Box \varphi)^*= {\sf Prov}_T (\varphi^*)$

\item[-]
$(\varphi \rhd \psi)^*= {Int}_T (\varphi^*,\psi^*)$
\end{itemize}
\end{definition}
\noindent
From now on, the $*$ will always range over realizations.
Often we will write $\Box_T \varphi$ instead of ${\sf Prov}_T (\varphi)$
or just even $\Box \varphi$. The $\Box$ can thus denote both a 
modal symbol and an arithmetical formula. For the $\rhd$-modality we
adopt a similar convention. We are confident that no
confusion will arise from this.

\begin{definition}
An interpretability principle of a theory $T$ is a modal formula
$\varphi$ that is provable in $T$ under any realization. That is,
$\forall *\ T\vdash \varphi^*$. The interpretability logic of a
theory $T$, we write \intl{T}, is the set of
all interpretability principles.
\end{definition}

For two classes of theories, \intl{T} is known.

\begin{definition}
A theory $T$ is reflexive if it proves the consistency of any of 
its finite subtheories. It is essentially reflexive if any finite
extension of it is reflexive.
\end{definition}

\begin{theorem}[Berarducci \cite{bera:inte90}, Shavrukov \cite{shav:logi88}]\label{theo:shav}
If $T$ is an essentially reflexive theory, then $\intl{T}=\ilm$.
\end{theorem}

\begin{theorem}[Visser \cite{viss:inte90}]
If $T$ is finitely axiomatizable, then $\intl{T}=\ilp$.
\end{theorem}

Now we have all in place to define our central subject of interest.

\begin{definition}\label{defi:ilall}
The interpretability logic of all reasonable arithmetical theories, 
we write
\intl{All}, is the set of formulas $\varphi$ such that
$\forall T  \, \forall *\ T\vdash \varphi^*$. Here the $T$ ranges over
all the reasonable arithmetical theories.
\end{definition}

For sure \intl{All} should be in the intersection of \ilm and \ilp.
Up to now, \intl{All} is unknown. In \cite{joo:prol00} it is conjectured
to be \extil{P_0W^*}. It is one of the major open problems in the 
field of interpretability logics, to characterize \intl{All} in a modal
way. As $\ilp \cap \ilm$ is known to be a strict upper bound, we also know some strict lower bounds. We close off this section on preliminaries by defining these lower bounds and providing modal semantics for interpretability logics. 

\subsection{A lower bound to \ilal}

We first define a core logic which will be part of all interpretability logics studied.
\begin{definition}\label{defi:il}
The logic \il is the smallest set of formulas being closed under
the rules of Necessitation\footnote{That is, from $\varphi$ you are allowed to conclude $\Box \varphi$.} and of Modus Ponens, that contains
all tautological formulas and all instantiations of the following
axiom schemata.

\begin{enumerate}
\item[${\sf L1}$]\label{ilax:l1}
        $\Box(A\rightarrow B)\rightarrow(\Box A\rightarrow\Box B)$
\item[${\sf L2}$]\label{ilax:l2}
        $\Box A\rightarrow \Box\Box A$
\item[${\sf L3}$]\label{ilax:l3}
        $\Box(\Box A\rightarrow A)\rightarrow\Box A$
\item[${\sf J1}$]\label{ilax:j1}
        $\Box(A\rightarrow B)\rightarrow A\rhd B$
\item[${\sf J2}$]\label{ilax:j2}
        $(A\rhd B)\wedge (B\rhd C)\rightarrow A\rhd C$
\item[${\sf J3}$]\label{ilax:j3}
        $(A\rhd C)\wedge (B\rhd C)\rightarrow A\vee B\rhd C$
\item[${\sf J4}$]\label{ilax:j4}
        $A\rhd B\rightarrow(\Diamond A\rightarrow \Diamond B)$
\item[${\sf J5}$]\label{ilax:j5}
        $\Diamond A\rhd A$
\end{enumerate}
\end{definition}
We will write $\il \vdash \varphi$ for $\varphi \in \il$. An \il-derivation
or \il-proof of $\varphi$ is a finite sequence of formulae ending on 
$\varphi$, each being a logical tautology, an instantiation of one
of the axiom schemata of \il, or the result of applying either Modus Ponens or
Necessitation to formulas earlier in the sequence.

Apart from the axiom schemata exposed in Definition \ref{defi:il} we will
need consider other axiom schemata too.

\begin{enumerate}
\item[${\sf M}$] 
$A \rhd B \rightarrow A \wedge \Box C \rhd B \wedge \Box C$

\item[${\sf P}$]
$A \rhd B \rightarrow \Box (A \rhd B)$

\item[${\sf M_0}$]
$A \rhd B \rightarrow \Diamond A \wedge \Box C \rhd B \wedge \Box C$

\item[${\sf W}$]
$A \rhd B \rightarrow A \rhd B \wedge \Box \neg A$

\item[${\sf W^*}$]
$A \rhd B \rightarrow B \wedge \Box C \rhd B \wedge \Box C \wedge \Box \neg A$

\item[${\sf P_0}$]
$A \rhd \Diamond B \rightarrow \Box (A \rhd B)$

\item[${\sf R}$]
$A\rhd B \rightarrow \neg (A \rhd \neg C) \rhd B \wedge \Box C$

\end{enumerate}

If $\sf X$ is a set of axiom schemata we will denote by \extil{X} the
logic that arises by adding the axiom schemata in $\sf X$ to \il.

Now, with the results of this paper, we know that \ilwstarpenul is a strict lower bound for \ilal.

\subsection{Semantics}

Interpretability logics come with a Kripke-like semantics.
As the signature of our language is countable, we shall only consider 
countable models.

\begin{definition}\label{defi:frames}
An \il-frame is a triple $\langle W,R,S \rangle$. Here $W$ is a non-empty countable
universe,
$R$ is a binary relation on $W$ and $S$ is a set of binary relations on $W$, indexed 
by elements of $W$.
The $R$ and $S$ satisfy the following requirements.
\begin{enumerate}
\item
$R$ is conversely well-founded\footnote{A relation $R$ on $W$ is called 
conversely well-founded if every non-empty subset of $W$ has an $R$-maximal element.}

\item
$xRy \ \& \ yRz \rightarrow xRz$

\item
$yS_xz \rightarrow xRy \ \& \ xRz$

\item
$xRy \rightarrow yS_x y$

\item
$xRyRz \rightarrow yS_x z$ \label{defi:point:inclusion}

\item
$uS_x v S_x w \rightarrow u S_x w$

\end{enumerate}
\end{definition}
\il-frames are sometimes also called Veltman frames.
We will on occasion speak of $R$ or $S_x$ transitions instead of relations.
If we write $ySz$, we shall mean that $yS_xz$ for some $x$. 
$W$ is sometimes called the universe, or domain, of the frame and its elements
are referred to as worlds or nodes. With $x{\upharpoonright}$ we shall denote
the set $\{ y \in W \mid x Ry \}$.
We will often represent $S$ by a ternary relation in the canonical way, writing
$\langle x,y,z\rangle$ for $yS_xz$.

\begin{definition}
An \il-model is a quadruple 
$\langle W, R , S, \Vdash \rangle$. Here $\langle W, R , S, \rangle$ is 
an \il-frame and $\Vdash$ is a subset of $W \times \prop$. We write
$w \Vdash p$ for $\langle w,p\rangle \in \ \Vdash$.
As usual, $\Vdash$ is extended to a subset $\widetilde{\Vdash}$ of 
$W \times \formil$ by demanding the following.
\begin{itemize}

\item
$w \widetilde{\Vdash} p$ iff $w \Vdash p$ for $p \in \prop$

\item
$w \not \widetilde{\Vdash} \bot$

\item
$w \widetilde{\Vdash} A \rightarrow B$ iff $w \not \widetilde{\Vdash} A$ or 
$w \widetilde{\Vdash} B$

\item
$w \widetilde{\Vdash} \Box A$ iff 
$\forall v \ (wRv \Rightarrow v \widetilde{\Vdash} A)$

\item
$w \widetilde{\Vdash} A \rhd B$ iff 
$\forall u \ (w R u \wedge u\widetilde{\Vdash} A
\Rightarrow \exists v \ (u S_w v  \widetilde{\Vdash} B))$

\end{itemize}
\end{definition}

Note that $\widetilde{\Vdash}$ is completely determined by $\Vdash$.
Thus we will denote $\widetilde{\Vdash}$ also by $\Vdash$.

%\input{goris/journalintroduction}

%\section{Interpretability logics}
%\input{goris/intylogics}

%\section{General exposition of the construction method}\label{sect:overview}
%\input{goris/generalexposition}

%\section{The construction method}\label{sect:motor}
%\input{goris/constructionmethod}

%\section{The logic \il}\label{sect:il}
%\input{goris/ilcomplete}

%
%\section{The Logic \ilm}
%\input{goris/ilmcomplete}

%\section{$\Sigma_1$-sentences}\label{sect:essigma}
%\input{goris/sigmasentences}

%\section{The logic \ilmo}
%\input{goris/ilmnaughtcomplete}

%\section{The logic \ilwstar}\label{sect:wstar}
%\input{goris/ilwstarcomplete}

%\section{Modal incompleteness and arithmetic}

\section{Generalized semantics}\label{subs:generalizedinmain}
\newcommand{\ilset}{\ensuremath{\il_{\textrm{set}}}\xspace}
\newcommand{\ilxset}[1]{\ilset{\principle{#1}}}
\newcommand{\po}{\principel{P_0}}
\newcommand{\ilmoset}{\ilxset{\mo}\xspace}
\newcommand{\ilposet}{\ilxset{\po}\xspace}
\newcommand{\ilrset}{\ilxset{{\sf R}}\xspace}

%\begin{document}

In \cite{Svej91}, \v{S}vejdar showed the independence of some extensions of \il.
Some of these logics, however, had the same class of characteristic Veltman frames.
Naturally, frames alone are not sufficient to distinguish between such logics so
\v{S}vejdar used models combined with some bisimulation arguments instead.
A generalized Veltman semantics, intended to uniformize this method, was proposed by de Jongh.
This generalized semantics was previously investigated by Vukovic\'c \cite{Vuk96}, Joosten \cite{joo98}
and Verbrugge and was successfully used to show independence of certain extensions of \il. 

We will set both the generalized Veltman semantics and the model/bisimulation method
to work in order to distinguish some extensions of \il, which are indistinguishable using Veltman
frames alone.
We use a slight variation of the semantics used in \cite{Vuk96}.
Any result in this section can be obtained with the old semantics, we think that
nevertheless this might be a useful variation.

\begin{definition}[\ilset-frame]\label{defi:ilsetframe}
A structure $\langle W,R,S \rangle$ is an \ilset-frame iff.
\begin{enumerate}
\item\label{ils:i} $W$ is an non-empty set.
\item\label{ils:ii} $R$ is a transitive and conversely well-founded binary
        relation on $W$.
\item\label{ils:iii} $S\subseteq W\times W\times(\mathcal{P}(W)-\{\emptyset\})$,
			such that (where we write $yS_xY$ for $(x,y,Y)\in S$)
	{
	\renewcommand{\theenumii}{(\alph{enumii})}
	\renewcommand{\labelenumii}{\theenumii}
	\begin{enumerate}
	\item\label{ils:iiia} if $xS_wY$ then $wRx$ and for all $y\in Y$, $wRy$,
	\item\label{ils:iiib} $S$ is quasi-reflexive: $wRx$ implies $xS_w\{x\}$,
	\item\label{ils:iiic} $S$ is quasi-transitive: If $xS_wY$ then for all $y\in Y$
		we have that if $y\not\in Z$ and 
	        $yS_wZ$ then $xS_wZ$,
	\item\label{ils:iiid} $wRxRy$ implies $xS_w\{y\}$.
	\end{enumerate}
	}
\end{enumerate}
\end{definition}

\begin{definition}[\ilset-model]
An \ilset-model is a structure $\langle W,R,S,\Vdash\rangle$ such
that $\langle W,R,S\rangle$ is an \ilset-frame and $\Vdash$ is a
binary relation between elements of $W$ and modal formulas such
that the following cases apply.
\begin{enumerate}
\item $\Vdash$ commutes with boolean connectives.
        For instance, $w\Vdash A\wedge B$ iff. $w\Vdash A$
        and $w\Vdash B$.
\item $w\Vdash\Box A$ iff. for all $x$ such that $wRx$ we have that  $x\Vdash A$.
\item $w\Vdash A\rhd B$ iff. for all $x$ such that $wRx$ and
        $x\Vdash A$ there exists some $Y$, such that $xS_wY$ and for all
        $y\in Y$, $y\Vdash B$.
\end{enumerate}
For $\ilset$-models $F=\langle W,R,S,\Vdash\rangle$ and $Y\subseteq W$ we will write
$Y\Vdash A$ for $\forall y\in Y,\;y\Vdash A$.

\end{definition}

As usual, we say that a formula $A$ is valid on an $\ilset$-frame
$F=\langle W,R,S\rangle$ if for any model $\overline
F=\langle W,R,S,\Vdash\rangle$, based on $F$, and any $w\in W$, we
have $\overline F,w\Vdash A$.

\begin{lemma}[Soundness of \il]
If $\il\vdash A$ then for any $\ilset$-frame $F$, $F\models A$.
\end{lemma}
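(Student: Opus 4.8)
The plan is to induct on the length of a $\il$-derivation of $A$. Since the $\ilset$-semantics interprets $\Box$ and $\rhd$ locally at a world, it suffices to show (i) every axiom of $\il$ is valid on every $\ilset$-frame $F$, and (ii) the rules modus ponens and necessitation preserve validity on a fixed $F$. Point (ii) is immediate: if $A\to B$ and $A$ are valid on $F$ then so is $B$, because $\Vdash$ commutes with the booleans; and if $A$ is valid on $F$ then $w\Vdash\Box A$ for every $w$, since that clause only quantifies over $R$-successors. Propositional tautologies are valid for the same boolean reason, and the distribution axiom $\Box(A\to B)\to(\Box A\to\Box B)$ is a direct unwinding of the clause for $\Box$.

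For the $\mathbf{GL}$-part the only substantive point is L\"ob's axiom $\Box(\Box A\to A)\to\Box A$; here I would run the standard Segerberg argument using condition~\ref{ils:ii}. If $w\not\Vdash\Box A$ then $\{x : wRx,\ x\not\Vdash A\}$ is non-empty, so by converse well-foundedness it has an $R$-maximal element $x_0$; transitivity of $R$ makes every $R$-successor of $x_0$ an $R$-successor of $w$, hence by maximality of $x_0$ it satisfies $A$, so $x_0\Vdash\Box A$. But $wRx_0$ and $w\Vdash\Box(\Box A\to A)$ give $x_0\Vdash\Box A\to A$, hence $x_0\Vdash A$, a contradiction.

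The interpretability axioms are each short and single out one clause of Definition~\ref{defi:ilsetframe}. For J1, $\Box(A\to B)\to(A\rhd B)$: given $wRx$ with $x\Vdash A$ we have $x\Vdash B$, and quasi-reflexivity~\ref{ils:iiib} gives $xS_w\{x\}$ with $\{x\}\Vdash B$. For J4, $(A\rhd B)\to(\Diamond A\to\Diamond B)$: a witnessing $Y$ for the relevant $x$ is non-empty by condition~\ref{ils:iii}, and any $y\in Y$ satisfies $B$ with $wRy$ by~\ref{ils:iiia}, so $\Diamond B$ holds at $w$. For J5, $\Diamond A\rhd A$: if $wRx$ and $x\Vdash\Diamond A$, pick $y$ with $xRy$ and $y\Vdash A$; then $wRxRy$, so~\ref{ils:iiid} gives $xS_w\{y\}$ with $\{y\}\Vdash A$. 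J3, $(A\rhd C)\wedge(B\rhd C)\to(A\vee B\rhd C)$, is purely propositional: a witness for whichever of $A$, $B$ holds at $x$ is also a witness for $A\vee B$.

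The one case requiring a genuine argument is J2, $(A\rhd B)\wedge(B\rhd C)\to(A\rhd C)$, because quasi-transitivity~\ref{ils:iiic} carries the side condition $y\notin Z$. Given $wRx$ with $x\Vdash A$, take $Y$ with $xS_wY$ and $Y\Vdash B$; for each $y\in Y$ (for which $wRy$ by~\ref{ils:iiia}) take $Z_y$ with $yS_wZ_y$ and $Z_y\Vdash C$. If some $y_0\in Y$ has $y_0\notin Z_{y_0}$, then~\ref{ils:iiic} yields $xS_wZ_{y_0}$, and $Z_{y_0}$ is the required witness; otherwise $y\in Z_y$ for every $y\in Y$, whence each $y\in Y$ satisfies $C$, so $Y$ itself is a witness with $xS_wY$ and $Y\Vdash C$. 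I expect this small case distinction around~\ref{ils:iiic}, together with the converse well-foundedness argument for L\"ob, to be the only points that need more than a one-line verification.
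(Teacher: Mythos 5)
Your proof is correct and follows essentially the same route as the paper: induction on derivations, with ${\sf J}2$ singled out as the only axiom where the side condition $y\notin Z$ of quasi-transitivity requires care. Your case split on whether $y_0\in Z_{y_0}$ is a minor variant of the paper's move, which assumes w.l.o.g.\ that some $y\in Y$ fails $C$ (otherwise $Y$ itself witnesses $A\rhd C$) so that $y\notin Z$ follows automatically from $Z\Vdash C$.
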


\begin{proof}
Validity is preserved under modus ponens and generalization and
trivially any propositional tautology is valid on each \ilset-frame.
So it is enough to show
that all axioms of \il are valid on each $\ilset$-frame.
We only treat ${\sf J}2$: $(A\rhd B)\wedge(B\rhd C)\rightarrow A\rhd C$.

Suppose $w\Vdash A\rhd B$ and $w\Vdash B\rhd C$. Pick some $x$
    with $wRx$ and suppose $x\Vdash A$.
    There exists some $Y$ with $xS_wY$ and $Y\Vdash B$.
    W.l.o.g.\ we can assume that for some $y\in Y$, $y\not\Vdash C$.
    Fix such a $y$. Since $y\Vdash B$ and $wRy$ there
    exists some $Z$ such that $yS_wZ$ and $Z\Vdash C$.
    In particular, $y\not\in Z$.
    And thus we have $xS_wZ$.
\end{proof}

\begin{theorem}[Completeness of \il]
If $A$ is valid on each \ilset-frame, then $\il\vdash A$.
\end{theorem}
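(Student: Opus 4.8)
The plan is a finitary Henkin construction. Let $A$ be $\il$-unprovable and let $\Phi$ be a finite set of formulas containing $A$, closed under subformulas and single negations, and large enough that all the auxiliary formulas used below (the relevant boxes, diamonds, and $\rhd$-formulas of its members) lie in $\Phi$. Take $W$ to be the set of maximal $\il$-consistent subsets of $\Phi$; since $\neg A$ is consistent it lies in some $\Gamma_0\in W$. Define $R$ in the usual \textsf{GL}-style: $\Gamma R\Delta$ iff every $\Box B\in\Gamma$ has $B,\Box B\in\Delta$, and some $\Box B\in\Phi$ lies in $\Delta\setminus\Gamma$. Transitivity is immediate, irreflexivity follows from Löb's axiom (which $\il$ contains), and since $W$ is finite this yields converse well-foundedness, so condition \ref{ils:ii} holds. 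One also has the standard existence lemma: if $\Diamond B\in\Gamma$ then $\Gamma R\Delta$ and $B\in\Delta$ for some $\Delta\in W$.

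The technical core is the definition of $S$. Following the canonical construction for \il\ from \cite{Vuk96}, adapted to the set-valued semantics, one stipulates, for $\Gamma R\Delta$, that $\Delta S_\Gamma Y$ holds precisely when $Y$ is a non-empty set of worlds, each an $R$-successor of $\Gamma$ or equal to $\Delta$, that together ``cover'' the $\Box$- and $\rhd$-demands $\Delta$ carries relative to $\Gamma$, subject to a fullness/saturation requirement on $Y$. Then conditions \ref{ils:iiia} and \ref{ils:iiid} are essentially built into the definition, \ref{ils:iiib} corresponds to the option $Y=\{\Delta\}$, and \ref{ils:iiic} is exactly the saturation requirement; verifying these uses that each world, being an $\il$-maximal consistent set, validates ${\sf J}1$--${\sf J}5$. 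Hence $\langle W,R,S\rangle$ is an \ilset-frame, and we set $w\Vdash p$ iff $p\in w$.

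What remains is the Truth Lemma, $w\Vdash B\Leftrightarrow B\in w$ for $B\in\Phi$, proved by induction on $B$. The Boolean cases are routine and the $\Box$ case is the standard argument via the existence lemma. In the $\rhd$ case, for $(\Leftarrow)$ assume $C\rhd D\in\Gamma$, $\Gamma R\Delta$, $C\in\Delta$; from ${\sf J}4$ one gets $\Diamond D\in\Gamma$, hence (existence lemma) $\Gamma$-successors containing $D$, and one checks — using ${\sf J}1,{\sf J}2,{\sf J}5$ together with the definition of $S$ — that the set $Y$ of all ``admissible'' such worlds satisfies $\Delta S_\Gamma Y$ and $Y\Vdash D$. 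For $(\Rightarrow)$ assume $\neg(C\rhd D)\in\Gamma$; since $\il\vdash\Box(C\to\Diamond D)\to C\rhd D$ (apply ${\sf J}1$ with consequent $\Diamond D$, then ${\sf J}5$ and ${\sf J}2$), we obtain $\neg\Box(C\to\Diamond D)\in\Gamma$ and, by the existence lemma, a $\Gamma$-successor $\Delta$ with $C,\Box\neg D\in\Delta$; the point is to choose $\Delta$ moreover \emph{$D$-critical}, i.e.\ so that no $Y$ with $\Delta S_\Gamma Y$ has $Y\Vdash D$. Arranging this — loading $\Delta$ with enough $\rhd$-formulas, via the contrapositives of ${\sf J}2$ and ${\sf J}3$, so that the fullness clause of $S$ forces every such $Y$ to contain a world omitting $D$ — is the step I expect to be the main obstacle, since it is precisely where the delicate shape of the $S$-definition has to be matched against the combinatorics of the interpretability axioms. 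Granting it, the induction hypothesis gives $\Delta\Vdash C$ and $w\nVdash C\rhd D$, completing the $\rhd$ case; then the Truth Lemma yields $\Gamma_0\nVdash A$, so $A$ fails on an \ilset-frame, which is the contrapositive of the theorem. (As an alternative to the direct construction, one may instead invoke completeness of \il\ with respect to ordinary Veltman frames and transfer a refuting Veltman model to an \ilset-model by putting $xS_wY$ iff $\emptyset\neq Y$ and $xS_wy$ for all $y\in Y$, checking that this preserves the frame conditions and the forcing relation.)
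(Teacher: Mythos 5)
Your primary argument — a finitary Henkin construction — is not the paper's route, and as written it has a genuine gap exactly where you flag one. The relation $S$ on the canonical model is never actually defined: ``cover the $\Box$- and $\rhd$-demands\dots subject to a fullness/saturation requirement'' is a placeholder, and every subsequent step (all four clauses of condition \ref{ils:iii}, and both directions of the $\rhd$ case of the Truth Lemma) depends on what that requirement is. Most seriously, the $(\Rightarrow)$ direction of the $\rhd$ case needs the existence of a $D$-critical successor $\Delta\ni C$ of $\Gamma$, i.e.\ one for which every $Y$ with $\Delta S_\Gamma Y$ meets the worlds refuting $D$; establishing this (via the consistency of $\{C,\Box\neg D\}\cup\{\neg E,\Box\neg E : E\rhd D\in\Gamma\}$ and the corresponding critical-cone bookkeeping in the definition of $S$) is the combinatorial heart of the de Jongh--Veltman completeness proof, and you have deferred rather than done it. So the main proof is incomplete.

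However, your closing parenthetical is precisely the paper's proof, and it is the right way to discharge the theorem cheaply: assume the known completeness of \il\ with respect to ordinary Veltman models, take a model refuting $A$, and lift it by setting $xS'_wY$ iff $Y\neq\emptyset$ and $xS_wy$ for all $y\in Y$. The frame conditions then follow from the corresponding Veltman frame conditions (quasi-transitivity being the only one requiring a line of argument, as in the paper), and a trivial induction shows the forcing relation is unchanged, since $w\Vdash C\rhd D$ transfers back and forth by passing between a witness $y$ and the singleton $\{y\}$ or an arbitrary element of a nonempty $Y$. If you intend the canonical-model construction to stand on its own you must supply the definition of $S$ and the critical-successor lemma; otherwise, promote the parenthesis to the proof and you match the paper.
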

\begin{proof}
Suppose $\il\not\vdash A$. Then there exists an \il-model $M=\langle W,R,S\rangle$,
and some $m\in M$ such that $M,m\Vdash\neg A$. Let
\mbox{$M'=\langle W,R,S',\Vdash'\rangle$}, where
$\Vdash'=\Vdash$ on propositional variables and is extended as usual, and
\[S'=\{(w,x,Y)\mid \forall y\in Y\; xS_wy \}.\]
It is easy to see that $M'$ is an \ilset-model.
As an example let us see that $S$ is quasi-transitive.
Suppose $xS'_wX$, $y\in X$ and $yS'_wY$. (We can assume $y\not\in Y$, but we won't use this.)
Pick $y'\in Y$. Then $xS_wy$ and $yS_wy'$. Thus $xS_wy'$.
Since $y'\in Y$ was arbitrary we conclude $xS'_wY$.

A straightforward induction on $B$ shows that for all $B$ we have $w \Vdash' B\Leftrightarrow w\Vdash B$.
Thus we have $m\Vdash'\neg A$ and in particular $A$ is not valid on the underling frame
of $M'$.
\end{proof}

\begin{definition}[\ilmoset-frame]\label{definition:ilmosetframe}
An $\ilset$-frame is an \ilmoset-frame iff. for all $w,x,y,Y$ such that
$wRxRyS_wY$ there exists some $Y'\subseteq Y$ such that
\begin{enumerate}
\item $xS_wY'$ and
\item for all $y'\in Y'$ we have that for all $z$, $y'Rz\rightarrow xRz$.
\end{enumerate}
\end{definition}

\begin{lemma}\label{lemm:moframes}
For any $\ilset$-frame $F=\langle W,R,S\rangle$ we have $F\models\mo$ iff. $F$ is an \ilmoset-frame.
\end{lemma}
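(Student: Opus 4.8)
The plan is a standard frame‑correspondence argument, proving the two implications separately; recall that $\mo$ is the principle $A\rhd B\rightarrow(\Diamond A\wedge\Box C)\rhd(B\wedge\Box C)$.

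For the direction from right to left, assume $F$ is an \ilmoset-frame, let $\overline F=\langle W,R,S,\Vdash\rangle$ be any model based on $F$, and suppose $w\Vdash A\rhd B$; we must show $w\Vdash(\Diamond A\wedge\Box C)\rhd(B\wedge\Box C)$. So take $x$ with $wRx$ and $x\Vdash\Diamond A\wedge\Box C$. From $x\Vdash\Diamond A$ pick $y$ with $xRy$ and $y\Vdash A$; by transitivity $wRy$, so $w\Vdash A\rhd B$ supplies some $Y$ with $yS_wY$ and $Y\Vdash B$. Now $wRxRyS_wY$, so Definition~\ref{definition:ilmosetframe} yields $Y'\subseteq Y$ with $xS_wY'$ such that every $R$‑successor of every point of $Y'$ is an $R$‑successor of $x$. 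Since $Y'\subseteq Y$ we get $Y'\Vdash B$; and if $y'\in Y'$ and $y'Rz$ then $xRz$, hence $z\Vdash C$ because $x\Vdash\Box C$, so $y'\Vdash\Box C$. Thus $Y'$ witnesses the requirement for $x$, and $\mo$ is valid on $F$.

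For the direction from left to right, assume $F\models\mo$ and take $w,x,y,Y$ with $wRxRyS_wY$; we build a model on $F$ using three distinct propositional variables. Let $a$ be true exactly at $y$, let $b$ be true exactly on $Y$, and let $c$ be true exactly on $\{z\mid xRz\}$. Then $x\Vdash\Box c$ holds by the very definition of the valuation of $c$, and $x\Vdash\Diamond a$ since $xRy$ and $y\Vdash a$. Moreover $w\Vdash a\rhd b$: any $R$‑successor of $w$ satisfying $a$ must be $y$ (and $wRy$ holds by transitivity), and $yS_wY$ with $Y\Vdash b$. Hence, by $F\models\mo$, we obtain $w\Vdash(\Diamond a\wedge\Box c)\rhd(b\wedge\Box c)$, and applying this at $x$ gives some $Y'$ with $xS_wY'$ and $Y'\Vdash b\wedge\Box c$. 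Now $Y'\Vdash b$ forces $Y'\subseteq Y$, and $xS_wY'$ is clause~1 of Definition~\ref{definition:ilmosetframe}; while $Y'\Vdash\Box c$ says that for every $y'\in Y'$ and every $z$ with $y'Rz$ we have $z\Vdash c$, i.e.\ $xRz$, which is clause~2. Hence $F$ is an \ilmoset-frame.

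I expect the only step requiring genuine thought to be the choice of valuation in the second direction: the key point is that making $c$ true precisely at the $R$‑successors of $x$ simultaneously makes $x\Vdash\Box c$ automatic and turns the $\Box c$ component of the conclusion of $\mo$ into exactly the second clause of the \ilmoset-frame condition, with $a$ pinpointing the single world $y$ below $w$ at which the antecedent $\Diamond a$ is realised through $x$. The remaining checks — preservation of validity under the boolean and modal clauses in the first direction, and that $y$ is indeed the only world below $w$ verifying $a$ — are routine.
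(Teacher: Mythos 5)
Your proof is correct and follows essentially the same route as the paper: the same direct verification for the right-to-left direction, and for the converse the identical choice of valuation (one variable true only at $y$, one on $Y$, one on the $R$-successors of $x$). No gaps.
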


\begin{proof}

($\Leftarrow$) Suppose $F$ is an \ilmoset-frame.
Let $\overline F=\langle W,R,S,\Vdash\rangle$ be a model based on this frame.
Pick $w\in W$ and suppose $w\Vdash A\rhd B$.
Pick $x\in W$ with $wRx$ and $x\Vdash\Diamond A\wedge\Box C$.
Now there exists some $y$ with $xRy$ and $y\Vdash A$.
Thus, for some $Y$, $yS_wY$ and $Y\Vdash B$.
Since $F$ is an \ilmoset-frame, there exists some $Y'\subseteq Y$ such that $xS_wY'$ and
for all $y'\in Y'$ we have that for all $z$, $y'Rz\rightarrow xRz$.
So, in particular, $Y'\Vdash\Box C$.

($\Rightarrow$) Suppose $F\models\mo$. Choose $w,x,y,Y$ such that $wRxRyS_wY$.
Let $p,q,s$ be distinct proposition variables.
Define an $\ilset$-model $\overline F=\langle W,R,S,\Vdash\rangle$ as follows.
\begin{align*}
v&\Vdash p\Leftrightarrow v=y\\
v&\Vdash q\Leftrightarrow v\in Y\\
v&\Vdash s\Leftrightarrow xRv
\end{align*}
Now, $w\Vdash p\rhd q$ and thus $w\Vdash \Diamond p\wedge\Box s\rhd q\wedge\Box s$.
Also, $x\Vdash\Diamond p\wedge\Box s$.
So, there exists some $Y'$ such that $xS_wY'$ and $Y'\Vdash q\wedge\Box s$.
But the only candidates for such an $Y'$ are the subsets of $Y$.
Also, since $Y'\Vdash\Box s$, by definition of $\Vdash$ we have
$y'\in Y'$ and $y'Rz$ implies $xRz$.
\end{proof}

\begin{definition}[\ilposet-frame]\label{definition:ilmposetframe}
An $\ilset$-frame is an \ilposet-frame iff. for all $w,x,y,Y,Z$
such that
\begin{enumerate}
\item $wRxRyS_wY$ and
\item for all $y\in Y$ there exists some $z\in Z$ with $yRz$,
\end{enumerate}
we have that there exists some $Z'\subseteq Z$ with $yS_xZ'$.
\end{definition}

\begin{lemma}\label{lemm:poframes}
For any $\ilset$-frame $F=\langle W,R,S\rangle$ we have $F\models\po$ iff. $F$ is an
\ilposet-frame.
\end{lemma}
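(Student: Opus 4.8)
The plan is to follow the proof of Lemma~\ref{lemm:moframes} closely, adapting it to $\po$, i.e.\ to the principle $A \rhd \Diamond B \rightarrow \Box(A \rhd B)$. The $\Box C$-conjunct that forced the auxiliary variable $s$ in the $\mo$ case is absent here, so two propositional variables suffice; and the requirement ``$y'Rz \rightarrow xRz$'' from Definition~\ref{definition:ilmosetframe} is replaced by the demand of Definition~\ref{definition:ilmposetframe} that the new witness set stay inside a prescribed set $Z$, which is exactly what a $\Diamond B$ consequent supplies.

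For ($\Leftarrow$), take a model $\overline F = \langle W,R,S,\Vdash\rangle$ on an \ilposet-frame, assume $w \Vdash A \rhd \Diamond B$, and fix an $R$-successor $x$ of $w$; the goal is $x \Vdash A \rhd B$. So fix $y$ with $xRy$ and $y \Vdash A$. Transitivity of $R$ gives $wRy$, whence $w \Vdash A \rhd \Diamond B$ yields $Y$ with $yS_wY$ and $Y \Vdash \Diamond B$. Put $Z := \{z \in W : z \Vdash B\}$; since $Y$ is nonempty and $Y \Vdash \Diamond B$, the second condition of Definition~\ref{definition:ilmposetframe} holds for $w,x,y,Y,Z$ (and in particular $Z \neq \emptyset$). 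The \ilposet-condition then gives some $Z' \subseteq Z$ with $yS_xZ'$, and $Z' \subseteq Z$ means precisely $Z' \Vdash B$. Hence $x \Vdash A \rhd B$, and since $x$ was an arbitrary $R$-successor of $w$ we conclude $w \Vdash \Box(A \rhd B)$.

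For ($\Rightarrow$), fix $w,x,y,Y,Z$ witnessing the hypotheses of Definition~\ref{definition:ilmposetframe} and, for distinct propositional variables $p,q$, set $v \Vdash p \Leftrightarrow v = y$ and $v \Vdash q \Leftrightarrow v \in Z$. Then $w \Vdash p \rhd \Diamond q$: the only $p$-point $R$-below $w$ is $y$ (and $wRy$ by transitivity), and $Y$ is a suitable witness because the second hypothesis gives, for every $y' \in Y$, some $z \in Z$ --- that is, some $z$ with $z \Vdash q$ --- such that $y'Rz$, so $y' \Vdash \Diamond q$. From $F \models \po$ we obtain $w \Vdash \Box(p \rhd q)$, hence $x \Vdash p \rhd q$; as $xRy$ and $y \Vdash p$, there is some $Z'$ with $yS_xZ'$ and $Z' \Vdash q$, and the statement $Z' \Vdash q$ is exactly the statement $Z' \subseteq Z$, as required.

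No serious obstacle is expected. The one point to watch is that the second condition of Definition~\ref{definition:ilmposetframe} reuses the bound variable name $y$; it should be read with $y$ fixed and a fresh variable ranging over $Y$. Minor bookkeeping is also needed to confirm that the sets played as third coordinates of $S$, and the set $Z$, are admissible --- i.e.\ nonempty. It is worth noting that, in contrast with the soundness proof for \il, quasi-reflexivity and quasi-transitivity of $S$ are not invoked: the set $Y$ reappears verbatim as the $S_w$-witness in both directions.
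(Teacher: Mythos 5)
Your proof is correct and follows essentially the same route as the paper's: in ($\Leftarrow$) you take $Z=\{z\mid z\Vdash B\}$ and apply the frame condition to get the $S_x$-witness $Z'\subseteq Z$, and in ($\Rightarrow$) you use the same two-variable valuation ($p$ true only at $y$, $q$ true on $Z$) and read off $Z'\subseteq Z$ from $Z'\Vdash q$. Your side remarks (non-emptiness, the reused bound variable $y$ in Definition~\ref{definition:ilmposetframe}) are accurate but do not change the argument.
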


\begin{proof}

($\Leftarrow$) Suppose $F$ is an \ilposet-frame.
And let $\overline F=\langle W,R,S,\Vdash\rangle$ be an $\ilset$-model based
on this frame.
Let $w\in W$ and suppose $w\Vdash A\rhd\Diamond B$. Pick $x,y$ in $W$
with $wRxRy$ and $y\Vdash A$. There exists some $Y$ with $yS_wY$ and $Y\Vdash\Diamond B$.
Put $Z=\{z\mid z\Vdash B\}$. Now for all $y\in Y$ there exists some $z\in Z$ such that $yRz$.
So, there exists some $Z'\subseteq Z$ with $yS_xZ'$.

($\Rightarrow$) Suppose $F\models\po$.
Choose $w,x,y\in W$ and $Y,Z\subseteq W$ such that
$wRxRyS_wY$ and for all $y\in Y$ there exists some $z\in Z$ with $yRz$.
Let $p,q$ be distinct propositional variables.
Define the $\ilset$-model $\overline F=\langle W,R,S,\Vdash\rangle$ as follows.
\begin{align*}
v&\Vdash p\Leftrightarrow v=y\\
v&\Vdash q\Leftrightarrow v\in Z
\end{align*}
Now, $Y\Vdash\Diamond q$.
So, $w\Vdash p\rhd\Diamond q$ and thus, since $w\Vdash\po$, $w\Vdash\Box (p\rhd q)$.
So for some $Z'$ we have $yS_xZ$ and $Z'\Vdash q$. But the only candidates for such
$Z'$ are the subsets of $Z$.
\end{proof}

\begin{figure}
\begin{center}
\resizebox{4.5cm}{!}{\input{ponotmo.xfig.pstex_t}}
\end{center}
\caption{An \ilposet-frame which is not an \ilmoset-frame.}
\label{figu:ponotmo}
\end{figure}

\begin{lemma}\label{lemm:ponotmo}
There exists an \ilposet-frame which is not an \ilmoset-frame.
\end{lemma}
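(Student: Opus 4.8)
The plan is to exhibit a concrete finite \ilset-frame $F$ — the one drawn in Figure~\ref{figu:ponotmo} — to check by hand that $F\models\po$ but $F\not\models\mo$, and then to read off from Lemmata~\ref{lemm:poframes} and~\ref{lemm:moframes} that $F$ is an \ilposet-frame which is not an \ilmoset-frame.

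Concretely, I would take $W=\{w,x,y,y_1,y_2,z_1,z_2\}$ and let $R$ be the transitive closure of $\{(w,x),(x,y),(y,z_1),(w,y_1),(w,y_2),(y_1,z_1),(y_2,z_2)\}$; since $W$ is finite and this relation is acyclic, $R$ is conversely well-founded. The features that matter are that $z_1$ and $z_2$ are $R$-maximal, that $z_1$ is the unique $R$-successor of $y$ and of $y_1$ while $z_2$ is the unique $R$-successor of $y_2$, and that the $R$-successors of $x$ are exactly $y$ and $z_1$ — in particular $x$ reaches $z_1$ (through $y$) but reaches neither $z_2$ nor $y_1$ nor $y_2$. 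For $S$ I would take the \emph{least} ternary relation for which $\langle W,R,S\rangle$ is an \ilset-frame and which in addition satisfies $yS_w\{y_1,y_2\}$. Such a least relation exists, because clauses~\ref{ils:iiib}--\ref{ils:iiid} of Definition~\ref{defi:ilsetframe} and quasi-transitivity only ever demand that further triples be added, and one checks routinely that this closure process never violates clause~\ref{ils:iiia}. A short computation of the closure records, among other things, that $xS_w\{y_1,y_2\}$ holds, that neither $xS_w\{y_1\}$ nor $xS_w\{y_2\}$ holds, and that $yS_x\{z_1\}$ holds (it is forced by $xRyRz_1$ via clause~\ref{ils:iiid}).

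For the first half, consider the \ilmoset-condition of Definition~\ref{definition:ilmosetframe} for $w,x,y$ and $Y=\{y_1,y_2\}$: indeed $wRxRyS_wY$. The non-empty subsets of $Y$ are $\{y_1\}$, $\{y_2\}$ and $\{y_1,y_2\}$, and among these only $\{y_1,y_2\}$ satisfies $xS_w\{y_1,y_2\}$. But $\{y_1,y_2\}$ does not meet the second condition of Definition~\ref{definition:ilmosetframe}, i.e.\ it is not $R$-bounded by $x$, since $y_2Rz_2$ while $\neg\,xRz_2$. Hence there is no admissible $Y'$, so $F\not\models\mo$ by Lemma~\ref{lemm:moframes} and $F$ is not an \ilmoset-frame.

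For the second half, run through the defining condition of Definition~\ref{definition:ilmposetframe} for every triple $aRbRc$ of $F$ (there are finitely many). For all of them but one, the premise that each element of the relevant $S_a$-successor set $Y$ has an $R$-successor inside $Z$ cannot be met by any $Z$, because those $Y$'s consist of $R$-maximal worlds, so the condition holds vacuously; the only genuine case is $a=w$, $b=x$, $c=y$ with $Y\in\{\{y\},\{y_1,y_2\}\}$, where every eligible $Z$ must contain $z_1$, and then $Z'=\{z_1\}\subseteq Z$ does the job because $yS_x\{z_1\}$. Hence $F\models\po$, so $F$ is an \ilposet-frame by Lemma~\ref{lemm:poframes}. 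I expect the main obstacle to be exactly this coupling between the two verifications: the arrow $yRz_1$ is there so that $yS_x\{z_1\}$ is available for the \ilposet-condition, and one must make sure that this arrow — together with the $R$-successors of $x$ it creates — does not also force $xS_w\{y_1\}$ or $xS_w\{y_2\}$, which would make $F$ validate $\mo$ after all; arranging that $x$ reaches $z_1$ only through $y$, and reaches $y_1,y_2$ not at all, is what keeps both halves consistent.
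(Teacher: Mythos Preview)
Your proof is correct and follows the same approach as the paper: construct the seven-point frame of Figure~\ref{figu:ponotmo} and verify directly that the \ilposet-condition holds while the \ilmoset-condition fails at $wRxRyS_w\{y_1,y_2\}$ (the only $Y'\subseteq\{y_1,y_2\}$ with $xS_wY'$ is $\{y_1,y_2\}$ itself, and $y_2Rz_2$ but not $xRz_2$). Your frame differs from the paper's in one harmless detail --- you add the edge $yRz_1$ so that $yS_x\{z_1\}$ is forced by clause~\ref{ils:iiid}, whereas the paper keeps $y$ $R$-maximal, adds $xRz_1$ directly, and stipulates $yS_x\{z_1\}$ as a primitive $S$-arrow --- which is why your \ilposet-verification has the extra non-vacuous case $Y=\{y\}$ that the paper does not.
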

\begin{proof}
Consider Figure \ref{figu:ponotmo}. It represents an \ilset-frame.
For clarity we have omitted the following arrows.
Those needed for the transitivity of $R$.
Those needed for the quasi-reflexivity of $S$.
Those needed for the inclusion of $S$ in $R$.
Additionally, quasi-transitivity dictates that we need $xS_w\{z_2\}$, $yS_w\{z_1\}$ and $yS_w\{z_2\}$.
All the other ones are drawn.

Let us first see that we actually have an \ilposet-frame.
So suppose $vRaRbS_vB$.
And let $Z$ be such that for all $b'\in B$ there exists some $z\in Z$ such that $b'Rz$.
It is not hard to see that only for $v=w$, $a=x$, $b=y$ and $B=\{y_1,y_2\}$ such a $Z$
exists. And that moreover this $Z$ must equal $\{z_1,z_2\}$.
According to the \po-condition we must find a $Z'\subseteq Z$ such that $yS_xZ$.
And $\{z_1\}$ is such a $Z'$.

Now let us see that we do not have an \ilmoset-frame.
Put $Y=\{y_1,y_2\}$. We have $wRxRyS_wY$.
So, if we do have an \ilmoset-frame then for some $Y'\subseteq Y$ we have
$xS_wY'$ and for all $y'\in Y'$ we have that for all $z$, $y'Rz$ implies $xRz$.
But the only $Y'\subseteq Y$ for which $xS_wY'$ is $Y$ itself.
We have $y_2\in Y$, $y_2Rz_2$ but not $xRz_2$.
\end{proof}

\begin{theorem}\label{theorem:moNotFromPo}
$\extil{P_0}\not\vdash\mo$.
\end{theorem}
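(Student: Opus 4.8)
The plan is to combine the three preceding lemmas. By Lemma~\ref{lemm:ponotmo} there is an \ilset-frame $F$ which is an \ilposet-frame but not an \ilmoset-frame. By Lemma~\ref{lemm:poframes}, since $F$ is an \ilposet-frame, we have $F\models\po$; and since every axiom of \il\ is sound with respect to \ilset-frames (the soundness lemma for \il), $F$ validates all of \ilposet\,$=\il+\po$. Hence $F$ is a frame for $\extil{P_0}$. On the other hand, by Lemma~\ref{lemm:moframes}, since $F$ is \emph{not} an \ilmoset-frame, $F\not\models\mo$. Therefore $\mo$ fails on a frame validating $\extil{P_0}$, so $\extil{P_0}\not\vdash\mo$ by soundness of $\extil{P_0}$ with respect to its \ilset-frames.

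To spell this out a little: one first notes that the system $\extil{P_0}$ is sound for the class of \ilposet-frames. This is immediate from the soundness lemma for \il\ (every \il-axiom is valid on every \ilset-frame, a fortiori on every \ilposet-frame) together with the $(\Leftarrow)$ direction of Lemma~\ref{lemm:poframes}, which gives that $\po$ is valid on every \ilposet-frame; validity is preserved under modus ponens and necessitation as in the \il\ soundness proof. Consequently, if $\extil{P_0}\vdash\mo$ then $\mo$ would be valid on every \ilposet-frame. But the frame $F$ of Lemma~\ref{lemm:ponotmo} is an \ilposet-frame on which, by the $(\Rightarrow)$ direction of Lemma~\ref{lemm:moframes} (equivalently: by $F$ not being an \ilmoset-frame), $\mo$ is refuted. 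This contradiction yields $\extil{P_0}\not\vdash\mo$.

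There is essentially no obstacle left: all the work has been front-loaded into Lemma~\ref{lemm:ponotmo}, whose construction (Figure~\ref{figu:ponotmo}) already verified the \po-condition and exhibited the failure of the \mo-condition at the witnessing points $w,x,y$ with $Y=\{y_1,y_2\}$ and $y_2Rz_2$, $\neg(xRz_2)$. The only point requiring a word of care is making explicit that generalized (\ilset) semantics is the right tool here — this is precisely the payoff of the earlier development: the analogous separating frame does \emph{not} exist for ordinary Veltman frames, which is why \v{S}vejdar's original argument needed models and bisimulations. So the proof is just the assembly: pick $F$ from Lemma~\ref{lemm:ponotmo}, invoke Lemmas~\ref{lemm:poframes} and~\ref{lemm:moframes}, and conclude by soundness.

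\begin{proof}
By Lemma~\ref{lemm:ponotmo} there is an \ilset-frame $F$ which is an \ilposet-frame but not an \ilmoset-frame. Since $F$ is an \ilposet-frame, Lemma~\ref{lemm:poframes} gives $F\models\po$, and by soundness of \il\ (and closure of validity under modus ponens and necessitation) $F$ validates every theorem of $\extil{P_0}=\il+\po$. Since $F$ is not an \ilmoset-frame, Lemma~\ref{lemm:moframes} gives $F\not\models\mo$. Hence $\mo$ is not a theorem of $\extil{P_0}$.
\end{proof}
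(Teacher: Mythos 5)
Your proof is correct and follows essentially the same route as the paper: take the frame from Lemma~\ref{lemm:ponotmo}, use Lemma~\ref{lemm:poframes} together with soundness to see it validates $\extil{P_0}$, and use Lemma~\ref{lemm:moframes} to see it refutes $\mo$. The paper's version merely compresses these steps, leaving the appeal to soundness and to the frame-correspondence lemmas implicit.
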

\begin{proof}
If $\extil{P_0}\vdash\mo$ then $\mo$ is valid on any \ilposet-frame.
But then any \ilposet-frame is an \ilmoset-frame.
Which, by Lemma \ref{lemm:ponotmo} is not so.
\end{proof}

\begin{corollary}
$\extil{P_0}\not\vdash\principle{R}$.
\end{corollary}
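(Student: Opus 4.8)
The plan is to deduce the corollary immediately from Theorem~\ref{theorem:moNotFromPo}, using the (standard) fact that $\principle{R}$ already proves $\mo$ over $\il$; that is, $\il\vdash\principle{R}\rightarrow\mo$. Granting this, the corollary is one line, by contraposition: if $\extil{P_0}\vdash\principle{R}$, then, since $\extil{P_0}$ extends $\il$ and is closed under modus ponens and necessitation, $\extil{P_0}\vdash\mo$, contradicting Theorem~\ref{theorem:moNotFromPo}; hence $\extil{P_0}\not\vdash\principle{R}$.

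So the real (and essentially only) work is to record a derivation of $\mo$ from $\principle{R}$ in $\il$. Assume $A\rhd B$. The relevant instance of $\principle{R}$ gives $\neg(A\rhd\neg C)\rhd(B\wedge\Box C)$. By the $\il$-principle $(A\rhd\neg C)\rightarrow(\Diamond A\rightarrow\Diamond\neg C)$ (i.e.\ ${\sf J}4$) we get $(A\rhd\neg C)\wedge\Box C\rightarrow\neg\Diamond A$, that is, $\il\vdash\Diamond A\wedge\Box C\rightarrow\neg(A\rhd\neg C)$. Since $\rhd$ is monotone in its antecedent (from $\vdash X\rightarrow X'$ infer $\vdash(X'\rhd Y)\rightarrow(X\rhd Y)$, using necessitation, ${\sf J}1$ and ${\sf J}2$), this turns $\neg(A\rhd\neg C)\rhd(B\wedge\Box C)$ into $(\Diamond A\wedge\Box C)\rhd(B\wedge\Box C)$, which is exactly the consequent of $\mo$.

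I do not expect any genuine obstacle: the whole content is the routine $\il$-derivation above, and even that could be replaced by a reference to the interpretability-logic literature, where it is well known that $\il\principle{R}$ proves $\mo$. A purely semantic alternative is also available — one could instead try to verify that the \ilposet-frame of Figure~\ref{figu:ponotmo} (or a minor variant of it) falsifies $\principle{R}$ — but that would first require isolating the generalized-Veltman frame condition corresponding to $\principle{R}$, which is more effort than needed here.
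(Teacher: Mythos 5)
Your proposal is correct and follows essentially the same route as the paper, which likewise derives the corollary from Theorem~\ref{theorem:moNotFromPo} together with the fact that $\principle{M_0}$ follows from $\principle{R}$. The only difference is that you spell out the (correct) $\il$-derivation of $\mo$ from $\principle{R}$ via ${\sf J}4$ and antecedent monotonicity, whereas the paper simply cites this as known.
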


\begin{proof}
By Theorem \ref{theorem:moNotFromPo} and the fact that \principle{M_0} follows from \principle{R}.
\end{proof}

\begin{definition}
Let $F=\langle W,R,S\rangle$ be an \ilset-frame.
For any $wRx$ we say that $\Gamma\subseteq W$ is a choice set
for $(w,x)$ iff. for all $X$ such that $xS_wX$, $X\cap\Gamma\neq\emptyset$.
\end{definition}

\begin{definition}
Let $F=\langle W,R,S\rangle$ be an \ilset-frame.
We say that $F$ is an $\ilset \sf{R}$-frame iff.
$wRxRyS_wY$ implies that for all choice sets $\Gamma$ for $(x,y)$ there exists
some $Y'=Y'(\Gamma)\subseteq Y$ such that $xS_wY'$
and for all $y'\in Y'$ we have that for all $z$, $y'Rz$ implies $z\in\Gamma$.
\end{definition}

\begin{lemma}
An \ilset-frame $F=\langle W,R,S\rangle$ is an $\ilset\sf{R}$-frame iff. $F\models{\sf R}$.
\end{lemma}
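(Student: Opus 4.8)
The plan is to follow the template of Lemmas~\ref{lemm:moframes} and~\ref{lemm:poframes}, keeping in mind that ${\sf R}$ is the schema $A\rhd B\rightarrow\neg(A\rhd\neg C)\rhd(B\wedge\Box C)$. The one conceptual point, which makes the notion of choice set match the principle, is the following translation: in any \ilset-model and for any $x$, we have $x\Vdash\neg(A\rhd\neg C)$ precisely when there is some $y$ with $xRy$ and $y\Vdash A$ such that $\Gamma:=\{z\mid z\Vdash C\}$ is a choice set for $(x,y)$. Indeed, $x\not\Vdash A\rhd\neg C$ says exactly that some $A$-successor $y$ of $x$ has the property that every $Z$ with $yS_xZ$ fails $Z\Vdash\neg C$, i.e.\ meets $\Gamma$. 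Once this dictionary is in place, both directions are mechanical.

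For ($\Leftarrow$), let $F$ be an $\ilset{\sf R}$-frame, let $\overline F$ be a model based on it, and suppose $w\Vdash A\rhd B$. Fix $x$ with $wRx$ and $x\Vdash\neg(A\rhd\neg C)$; the observation supplies $y$ with $xRy$, $y\Vdash A$ and $\Gamma:=\{z\mid z\Vdash C\}$ a choice set for $(x,y)$. By transitivity of $R$ we have $wRy$, so $w\Vdash A\rhd B$ yields some $Y$ with $yS_wY$ and $Y\Vdash B$; thus $wRxRyS_wY$. Applying the $\ilset{\sf R}$-frame condition to this configuration and to $\Gamma$ produces $Y'\subseteq Y$ with $xS_wY'$ such that for all $y'\in Y'$ and all $z$, $y'Rz$ implies $z\in\Gamma$; hence $Y'\Vdash\Box C$, and $Y'\subseteq Y$ gives $Y'\Vdash B$. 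So $Y'$ witnesses the consequent of ${\sf R}$ at $x$; since $x$ was arbitrary, $w\Vdash\neg(A\rhd\neg C)\rhd(B\wedge\Box C)$.

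For ($\Rightarrow$), assume $F\models{\sf R}$ and fix $wRxRyS_wY$ together with a choice set $\Gamma$ for $(x,y)$. Take distinct propositional variables $p,q,s$ and define $\overline F=\langle W,R,S,\Vdash\rangle$ by $v\Vdash p\Leftrightarrow v=y$, $v\Vdash q\Leftrightarrow v\in Y$ and $v\Vdash s\Leftrightarrow v\in\Gamma$. Then $w\Vdash p\rhd q$, since the only $p$-point is $y$, $wRy$ holds, and $yS_wY$ with $Y\Vdash q$; and $x\Vdash\neg(p\rhd\neg s)$, since $y$ is the only $p$-point with $xRy$ and no $Z$ with $yS_xZ$ can satisfy $Z\Vdash\neg s$ because $\Gamma$ is a choice set for $(x,y)$. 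By ${\sf R}$ we get $w\Vdash\neg(p\rhd\neg s)\rhd(q\wedge\Box s)$, and instantiating this at $x$ (using $wRx$ and $x\Vdash\neg(p\rhd\neg s)$) gives $Y'$ with $xS_wY'$ and $Y'\Vdash q\wedge\Box s$. Reading off the valuation, $Y'\subseteq Y$ and for every $y'\in Y'$ and all $z$, $y'Rz$ implies $z\in\Gamma$, which is exactly the required $Y'=Y'(\Gamma)$.

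I do not anticipate a genuine obstacle: the argument is a routine adaptation of the two preceding lemmas. The only places that require a little care are, in the forward direction, recognising the choice-set condition as the literal negation of $x\Vdash A\rhd\neg C$, and, in the backward direction, checking that the set $Y'$ delivered by validity of ${\sf R}$ really is contained in $Y$ and in the extension of $s$ — both immediate from the chosen valuation, but worth stating explicitly.
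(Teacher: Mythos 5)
Your proof is correct and follows essentially the same route as the paper: the same model construction with $p,q,s$ tracking $y$, $Y$ and $\Gamma$ for the frame-condition direction, and the same application of the $\ilset{\sf R}$-condition to a choice set extracted from $x\Vdash\neg(A\rhd\neg C)$ for the validity direction. The only (harmless) deviations are notational --- you use the variant $\neg(A\rhd\neg C)\rhd(B\wedge\Box C)$ of ${\sf R}$ where the paper uses $\neg(A\rhd C)\rhd(B\wedge\Box\neg C)$, and you take the full truth set of $C$ as the choice set where the paper selects a set of witnesses inside $\bigcup_{yS_xU}U$, which is if anything slightly cleaner.
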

\begin{proof}
$(\Rightarrow)$ Suppose $F$ is an $\ilset\sf{R}$-frame.
Let $\overline F=\langle W,R,S,\Vdash\rangle$ be a model based on $F$.
Choose $w,x\in W$ and suppose $wRx$, $w\Vdash A\rhd B$ and $x\Vdash \neg(A\rhd C)$.
We have to find some $Y'$ with $xS_wY'$ and $Y'\Vdash B\wedge\Box \neg C$.
There exists some $y\in W$ such that $xRy$, $y\Vdash A$ and for all $U$ such that $yS_xU$
there exists some $u\in U$ with $u\Vdash \neg C$. Let $\Gamma$ be a choice set for $(x,y)$
such that $\Gamma\Vdash\neg C$ and $\Gamma\subseteq\bigcup_{yS_xU} U$.
Since $w\Vdash A\rhd B$ we can find some $Y$ such that $yS_wY$ and $Y\Vdash B$.
By the ${\sf R}$ frame condition we can find some $Y'\subseteq Y$ such that
$xS_wY'$ and for all $y'\in Y'$ we have that for all $z$, $y'Rz$ implies $z\in\Gamma$.
So since $\Gamma\Vdash \neg C$ we conclude $Y'\Vdash B\wedge\Box\neg C$.

$(\Leftarrow)$. Suppose $F\models{\sf R}$. Let $w,x,y,Y\in W$ and suppose $wRxRyS_wY$.
Let $\Gamma$ be a choice set for $(x,y)$.
Let $p,q,s,t$ be distinct propositional variables.
Define the \ilset-model $\overline F=\langle W,R,S,\Vdash\rangle$ as follows.
\begin{align*}
v\Vdash p &\Leftrightarrow v=y\\
v\Vdash q &\Leftrightarrow v\in Y\\
%v\Vdash t &\Leftrightarrow v\not\in\Gamma\\
v\Vdash s &\Leftrightarrow v\not\in\Gamma
\end{align*}
Now, $w\Vdash p\rhd q$. So, $w\Vdash \neg(p\rhd s)\rhd q\wedge\Box\neg s$.
Also, $\Gamma\Vdash\neg s$. So, $x\Vdash\neg(p\rhd s)$
and therefore there exists some $Y'$ such that $xS_wY'$ and $Y'\Vdash q\wedge\Box s$.
Since $Y'\Vdash q$ we must have $Y'\subseteq Y$.
Now let $y'\in Y'$ and pick some $z$ for which $y'Rz$.
Then $z\Vdash \neg s$ and thus
by definition of $\Vdash$, $z\in\Gamma$.
\end{proof}

\begin{theorem}
$\extil{P_0M_0} \nvdash \principle{R}$.
\end{theorem}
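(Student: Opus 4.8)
The plan is to mimic Lemma~\ref{lemm:ponotmo} together with Theorem~\ref{theorem:moNotFromPo}: I will exhibit a single \ilset-frame $F$ on which $\po$ and $\mo$ are valid but $\principle{R}$ is not. By the soundness argument for $\il$ (validity on $F$ is preserved by modus ponens and generalization, and every $\il$-axiom holds on every \ilset-frame) together with Lemmas~\ref{lemm:poframes} and~\ref{lemm:moframes}, every theorem of $\extil{P_0M_0}$ is then valid on $F$; since $\principle{R}$ is not, this yields $\extil{P_0M_0}\nvdash\principle{R}$. So the whole task reduces to constructing $F$ and, using the lemma that characterises the $\ilset\sf{R}$-frame condition, checking that $F$ is \emph{not} an $\ilset\sf{R}$-frame although it \emph{is} both an \ilposet- and an \ilmoset-frame.

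The frame I have in mind has worlds $w,x,y,y_1,y_2,z_1,z_2$, with $R$ the transitive closure of $wRx$, $xRy$, $wRy_1$, $wRy_2$, $y_1Rz_1$, $y_2Rz_2$, $xRz_1$ and $xRz_2$; note that $x$ sees $y,z_1,z_2$ but \emph{not} $y_1$ or $y_2$. For $S$ I take the smallest family of triples that contains everything forced by quasi-reflexivity~\ref{ils:iiib} and by clause~\ref{ils:iiid}, plus the two ``design'' triples $yS_w\{y_1,y_2\}$ and $yS_x\{z_1\}$, and is closed under quasi-transitivity~\ref{ils:iiic}. The engine of the argument is an asymmetry between $z_1$ and $z_2$: closing $S$ produces $yS_x\{y\}$ and $yS_x\{z_1\}$ but never $yS_x\{z_2\}$, so $\Gamma=\{y,z_1\}$ is a choice set for $(x,y)$ that misses $z_2$; and since $\neg xRy_1$ and $\neg xRy_2$, neither $\{y_1\}$ nor $\{y_2\}$ stands in $S_w$ to $x$, so the only $Y'\subseteq\{y_1,y_2\}$ with $xS_wY'$ is $\{y_1,y_2\}$ itself. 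At the instance $wRxRyS_w\{y_1,y_2\}$ with this $\Gamma$ the $\ilset\sf{R}$-frame condition therefore fails: $y_2\in\{y_1,y_2\}$, $y_2Rz_2$, yet $z_2\notin\Gamma$.

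The complementary checks are short. For $\mo$, one runs through the instances $uRaRbS_uB$; the only one not handled vacuously (because $y,z_1,z_2$ are $R$-maximal and the remaining worlds carry only trivial $S$-structure) is $wRxRyS_w\{y_1,y_2\}$, and there $Y'=\{y_1,y_2\}$ works precisely because $xRz_1$ and $xRz_2$, i.e.\ every $R$-successor of a point of $\{y_1,y_2\}$ lies below $x$. For $\po$ the only instance with an admissible covering set $Z$ is again $wRxRyS_w\{y_1,y_2\}$, where $Z$ must contain both $z_1$ and $z_2$; there $Z'=\{z_1\}\subseteq Z$ works because $yS_x\{z_1\}$. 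Lemmas~\ref{lemm:moframes} and~\ref{lemm:poframes} then give $F\models\mo$ and $F\models\po$, and we conclude as above.

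The one place needing genuine care is checking that the quasi-transitive closure of $S$ does \emph{not} accidentally create $xS_w\{y_1\}$, $xS_w\{y_2\}$ or $yS_x\{z_2\}$ --- any of these would collapse the argument --- and that $F$ really satisfies \ref{ils:iiia}--\ref{ils:iiid}. This is a finite computation: a triple $bS_w\{y_i\}$ can only arise from quasi-reflexivity (forcing $b=y_i$) or from clause~\ref{ils:iiid} (which would need $cRy_i$ for some $c$ with $wRc$, and there is none), and $yS_x\{z_2\}$ is excluded in the same way. Once the few $S_w$-, $S_x$-, $S_{y_1}$- and $S_{y_2}$-triples have been listed, the case analyses for $\mo$, $\po$ and the failure of $\principle{R}$ are routine; drawing $F$ as a diagram in the style of Figure~\ref{figu:ponotmo} would make the verification easy to read off.
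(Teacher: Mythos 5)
Your proposal is correct and follows essentially the same strategy as the paper: build one explicit finite \ilset-frame, verify the \ilmoset- and \ilposet-frame conditions directly, exhibit a choice set for $(x,y)$ omitting one endpoint to refute the $\ilset{\sf R}$-condition, and conclude by soundness plus the frame-characterisation lemmas. Your frame is only a cosmetic variant of the paper's (the paper takes $yS_x\{b_0,b_1\}$ with both successors of the $S_w$-target in the $S_x$-image and lets the choice set omit $b_0$, whereas you take the singleton $yS_x\{z_1\}$ so that the choice set $\{y,z_1\}$ omits $z_2$), and your checks, including that the quasi-transitive closure creates no unwanted triples, go through.
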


\begin{proof}
Now that we have all the frame conditions at hand, we will provide a frame that is both a \principle{P_0} and a \principle{M_0} frame but not an \principle{R} frame. We define the required frame $F$ as follows.
\[
\begin{array}{l}
F=\langle W, R, S\rangle \  \mbox{with } W = \{  w, x, y, a_0, a_1, b_0, b_1\}\\
R \mbox{ and $S$ are defined by the minimal requirements below:} \\
\ \\
wRxRyS_wA\\
\ \ \ \ \ \, \ \ \ yS_xB\\
\mbox{where $A = \{a_0,a_1\}$ and $B= \{  b_0, b_1\}$}\\
a_i R b_i \mbox{ for $i=0,1$} \mbox{ and }\\ 
x R b_i \mbox{ for $i=0,1$}.
\end{array}
\]
We conclude the proof with a series of easy observations.
\begin{enumerate}
\item
$F$ is an $\ilxset{M_0}$ frame is clear: let $Y'=Y=A$ in Definition \ref{definition:ilmosetframe}.

\item
$F$ is an $\ilxset{P_0}$ frame is clear: let $Z'=Z=B$ in Definition \ref{definition:ilmposetframe}.

\item
$F$ is not an $\extil{R}$ frame: Let $\Gamma$ be a choice set for $(x,y)$ that omits $b_0$. As for any $Y'$ we have $yS_wY' \subseteq Y$ implies $Y'=Y$, we see that $a_0\in Y'$. But $a_0Rb_0$ and $b_0\notin \Gamma$.

\end{enumerate}
\end{proof}

We can also formulate a frame condition for \principle{W}. However it shall turn out that this frame condition becomes so intricate that it is not efficient to work with even over finite frames. However, one can check that the exposed counter frames above are indeed also $\ilxset{W}$ frames. We choose not to do so and rather give direct proofs that include \principle{W} in Section \ref{subs:incompletepenulwestar}. We start by defining a higher order property $\sf Not{-}W$ on frames.
In this definition, capital letters shall range over subsets and lower case to elements of the domain. The index $i$ is supposed to run over the natural numbers.

\begin{definition}
\[
\begin{array}{ll}
{\sf Not{-}W} \ \ :=\ \  & \exists \,  w,\  z_0,\  \{Y_i\}_{i\in \omega}, \ \{ y_i\}_{i\in \omega,\  y_i\in Y_i} ,\  Z,\   
\{  z_{i+1}\}_{i \in \omega, \ z_{i+1}\in Z}\\
\ & [ \forall i \in \omega (z_i S_w Y_i \ni y_i Rz_{i+i}) \ \& \\
\ & \ \forall z\in Z \exists i\in \omega zS_w Y_i \ \& \\
\ & \ \forall z\in Z \forall Y\ (zS_wY \wedge Y \subseteq (\cup_{i\in \omega}Y_i) \ \to \  
\exists z' \in Z \exists y \in Y yRz')]\\

\end{array}
\]
\end{definition}

\begin{lemma}
For any $\ilset$ frame $F$ we have that
\[
F \models {\sf Not{-}W} \ \ \Leftrightarrow \ \ F \not \models W.
\]
\end{lemma}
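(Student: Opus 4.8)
Recall that $\principle{W}$ is the principle $A\rhd B\to A\rhd(B\wedge\Box\neg A)$. The first step is to unwind what it means for a model $\langle F,\Vdash\rangle$ to refute $\principle{W}$: this happens at a world $w$ precisely when there are formulas $A,B$ and a world $z_0$ with $wRz_0$, $z_0\Vdash A$, $w\Vdash A\rhd B$, and
\[
(\star)\qquad\text{for every }Y\text{ with }z_0S_wY\text{ there is }y\in Y\text{ with }y\not\Vdash B\wedge\Box\neg A .
\]
The definition of ${\sf Not{-}W}$ is tailored to be the frame-level residue of such a configuration, and the proof amounts to making that correspondence exact: the $Y_i$ are $B$-sets, the $z_i$ are $A$-worlds, the chain $z_iS_wY_i\ni y_iRz_{i+1}$ records that every $B$-set below an $A$-world contains a point from which a further $A$-world is seen (because $\Box\neg A$ fails there), clause~2 of ${\sf Not{-}W}$ is the content of $w\Vdash A\rhd B$, and clause~3 is $(\star)$ relativised to $Z$.

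For $F\models{\sf Not{-}W}\Rightarrow F\not\models\principle{W}$ I would take the witnesses $w,z_0,\{Y_i\},\{y_i\},Z,\{z_{i+1}\}$ and put a model on $F$ with fresh variables $a,b$ and valuation $v\Vdash a\iff v\in Z$, $v\Vdash b\iff v\in\bigcup_{i\in\omega}Y_i$. Then $w\Vdash a\rhd b$: an $a$-world is some $z\in Z$, which by clause~2 has $zS_wY_i$ for some $i$, with $Y_i$ a set of $b$-worlds; and $z_1\in Z$, so there is at least one $a$-world below $w$. But $\principle{W}$ fails at $w$ with witness $z_1$: we have $wRz_1$ (clause~1 and frame clause~(a)), $z_1\Vdash a$, and for any $Y$ with $z_1S_wY$ either $Y\not\subseteq\bigcup_iY_i$, so some point of $Y$ refutes $b$, or $Y\subseteq\bigcup_iY_i$ and clause~3 supplies $y\in Y$, $z'\in Z$ with $yRz'$, whence $y\Vdash\Diamond a$; either way $Y\not\Vdash b\wedge\Box\neg a$, so $z_1\not\Vdash a\rhd(b\wedge\Box\neg a)$. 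Thus this instance of $\principle{W}$ is refuted and $F\not\models\principle{W}$.

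For the converse I would fix a refuting model, with $w,A,B,z_0$ as above, and set $Z:=\{z:z\Vdash A\text{ and }z_0S_w\{z\}\}$. The engine is a short list of closure facts about $Z$, each obtained from one or two applications of quasi-transitivity against $z_0S_w\{z\}$ together with frame clauses~(a),(d) of Definition~\ref{defi:ilsetframe}: (i) $z_0\in Z$, by quasi-reflexivity; (ii) no $z\in Z$ satisfies $B\wedge\Box\neg A$, by applying $(\star)$ to $\{z\}$; (iii) if $z\in Z$, $zS_wY$, $y\in Y$, $yRz'$ and $z'\Vdash A$, then $z'\in Z$; (iv) if $z\in Z$, $zS_wY$ and $z\notin Y$, then $z_0S_wY$. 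Using $w\Vdash A\rhd B$ I assign each $z\in Z$ a set $Y_z$ with $zS_wY_z\Vdash B$, and from $z_0$ I grow a chain: given $z_i\in Z$ with $Y_{z_i}$, I apply $(\star)$ to $Y_{z_i}$ --- via~(iv) when $z_i\notin Y_{z_i}$, and directly via~(ii) (taking $y_i:=z_i$) otherwise --- to get $y_i\in Y_{z_i}$ with $y_i\Vdash B$ and $y_i\not\Vdash\Box\neg A$, hence $y_iRz_{i+1}$ with $z_{i+1}\Vdash A$, and $z_{i+1}\in Z$ by~(iii). Clauses~1 and~2 are then built in, and clause~3 is $(\star)$ once more: for $z\in Z$ and $Y\subseteq\bigcup_iY_i$ with $zS_wY$, if $z\notin Y$ then $z_0S_wY$ by~(iv), and $(\star)$ with $Y\subseteq\bigcup_iY_i\Vdash B$ gives $y\in Y$ with $yRz'$, $z'\Vdash A$, and $z'\in Z$ by~(iii); if $z\in Y$ then $z\Vdash B$, so~(ii) gives $z\not\Vdash\Box\neg A$, hence $zRz'$ with $z'\Vdash A$ and $z'\in Z$ by~(iii).

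The work --- and the source of the ``intricacy'' the surrounding text warns about --- is all in this converse direction: the family just produced is $\{Y_z:z\in Z\}$, but $Z$ (all $A$-worlds below $z_0$) and the $Y_z$ may be uncountable, whereas ${\sf Not{-}W}$ insists on an $\omega$-indexed family. So instead of ``take all of $Z$'' one must run a fair stage-by-stage recursion that simultaneously extends the chain and discharges the closure demands of clause~3 --- each discharged demand enlarges $\bigcup_iY_i$ and may throw up new $A$-worlds that have to be folded into $Z$ --- and then argue that this fixed point closes after $\omega$ stages, so that a countable $Z$ with an $\omega$-indexed $\{Y_i\}$ suffices; facts~(ii) and~(iv) are exactly what prevents the recursion from diverging and what handles the degenerate sub-cases $z\in Y$ and $z\in Y_z$.
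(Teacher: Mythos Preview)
Your forward direction is the paper's argument: the same valuation ($p\leftrightarrow Z$, $q\leftrightarrow\bigcup_iY_i$) and the same verification. You single out $z_1$ as the explicit witness refuting $a\rhd(b\wedge\Box\neg a)$, whereas the paper argues uniformly for every $z\in Z$; either is fine.

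In the converse direction you take a genuinely different route. The paper first builds a reachability set $\mathcal{R}_{w,z_0}$ (the closure of $\{z_0\}$ under passing to members of $S_w$-successor sets), then sets
\[
Z=\{x\in\mathcal{R}_{w,z_0}:x\Vdash A\ \text{and}\ (x=z_0\ \text{or}\ \exists Y\,\exists y\in Y\,(z_0S_wY\wedge yRx))\},
\]
and $\mathbb Y=\{Y\subseteq\mathcal{R}_{w,z_0}:zS_wY\Vdash B\text{ for some }z\in Z\}$; it then picks the chain $z_i,Y_i,y_i$ inside these and asserts clauses~2 and~3. You instead take $Z=\{z:z\Vdash A\text{ and }z_0S_w\{z\}\}$ and isolate the four closure facts (i)--(iv). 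This is tidier: fact~(iv) reduces every non-degenerate clause-3 demand to $(\star)$ at $z_0$, and your case split $z\in Y$ versus $z\notin Y$ (handled via (ii)) is something the paper glosses over --- its clause-3 check simply asserts ``as also $z_0S_wY$'', which needs $z\notin Y$ to go through by quasi-transitivity.

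Where you go beyond the paper is in naming the countability mismatch: ${\sf Not{-}W}$ demands an $\omega$-indexed $\{Y_i\}$, but both your $Z$ and the paper's may be uncountable, so assigning one $Y_z$ per $z\in Z$ does not yet give an $\omega$-family satisfying clause~2. The paper simply asserts clause~2 ``by the definitions of $Z$ and the $Y_i$'' without argument; you correctly diagnose this as the residual work and propose a fair dovetailing recursion. The diagnosis is right, but your sketch is not yet a proof: clause~3 quantifies over \emph{all} $Y\subseteq\bigcup_iY_i$ with $zS_wY$, which is potentially continuum-many constraints even when $\bigcup_iY_i$ is countable, so ``discharging the closure demands of clause~3 stage by stage'' does not obviously close off after $\omega$ steps. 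To finish you would need to show that the witnesses $z'$ required by clause~3 can always be drawn from a fixed countable stock independent of the particular $Y$ --- a point that neither your outline nor the paper's proof supplies.
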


\begin{proof}
"$\Rightarrow$": Suppose $\sf Not{-}W$ holds. We use the same notation as in the definition and set out to define a valuation so that the instance $p\rhd q \to p \rhd q \wedge \Box \neg p$ of $W$ fails. 

We define
\[
\begin{array}{lll}
a\Vdash p & \Leftrightarrow & a \in Z\\
a \Vdash q& \Leftrightarrow & \exists i \in \omega \ a \in Y_i\\
\end{array}
\]
Now clearly $w\Vdash p\rhd q$ as $\forall z\in Z \exists i\in \omega zS_w Y_i$ and $p$ is only true at points in $Z$ and the $Y_i$ make $q$ true. However, $w\Vdash p\rhd q \wedge \Box \neg p$ can never hold. For, suppose that some $Y$ and some $z\in Z$ we have that $zS_wY$ and $Y\Vdash q$. By the definition of $\Vdash$ clearly, $Y \subseteq (\cup_{i\in \omega}Y_i)$, whence $\exists z' \in Z \exists y \in Y yRz')$ and $Y\not \Vdash \Box \neg p$ as $z'\Vdash p$.

"$\Leftarrow$": Suppose that $A\rhd B \to A\rhd B \wedge \Box \neg A$ fails to hold in $w$ in some model based on $F$. We will set out to find  the required $z_0,\  \{Y_i\}_{i\in \omega}, \ \{ y_i\}_{i\in \omega,\  y_i\in Y_i} ,\  Z,\   \{  z_{i+1}\}_{i \in \omega, \ z_{i+1}\in Z}$.

As $w \Vdash \neg (A\rhd B \wedge \Box \neg A)$, we can find $z_0$ with $wRz_0\Vdash A$ such that for no $Y$ with $z_0 S_w Y$ we have $Y\Vdash B\wedge \Box \neg A$. To find our other entities, we will need a technical definition of $\mathcal{R}_{w,z_0}$ of those those worlds that are \emph{reachable} from w by means of $R$ and $S_w$ successors.
\[
\begin{array}{l}
z_0 \in \mathcal{R}_{w,z_0};\\
y \in \mathcal{R}_{w,z_0} \ \& \ y S_wY \ \Rightarrow Y \subseteq \mathcal{R}_{w,z_0}.
\end{array}
\]
Next, we define 
\[
Z := \{ x \in \mathcal{R}_{w,z_0} \mid x \Vdash A \wedge (\exists Y \exists y \in Y( z_0 S_wY \wedge yRx)\vee x=z_0)
\}
\]
and
\[
\mathbb{Y} := \{  Y \subseteq \mathcal{R}_{w,z_0} \mid z S_w Y \Vdash B \mbox{ for some } z\in Z \}.
\]
Now it is easy to pick $\{ Y_i\}_{i\in \omega}$ with $Y_i \in \mathbb{Y}$ and to pick $\{z_i\}_{i\in \omega} \subseteq Z$ such that $z_i S_w Y_i \ni y_i R z_{i+1}$: as the $z_i \Vdash A$ we can go via $S_w$ to some $Y\Vdash B$ whence by definition $Y \in \mathbb{Y}$; as $Y\not \Vdash \Box \neg A$, at some $R$ successor $\tilde{z}$ of some $y\in Y$ we have $\tilde{z}\Vdash A$ whence $\tilde{z}\in Z$.

By the definitions of $Z$ and the $Y_i$ we have that $ \forall z\in Z \exists i\in \omega zS_w Y_i $. Thus we only need to check that $\forall z\in Z \forall Y\ (zS_wY \wedge Y \subseteq (\cup_{i\in \omega}Y_i) \ \to \  
\exists z' \in Z \exists y \in Y yRz')$. But this is also not hard. If we consider any $z \in Z$ and $Y$ for which $zS_wY \wedge Y \subseteq (\cup_{i\in \omega}Y_i)$, we see that $Y\Vdash B$. But, as also $z_0S_wY$, we need to have $Y \not \Vdash B \wedge \Box \neg A$ whence we can find some $y\in Y$ and $z' \in Z$ with $yRz'\Vdash B$.
\end{proof}

\section{Incompleteness of \extil{P_0W^*}}\label{subs:incompletepenulwestar}

Let us first calculate the frame condition of \principel{R} where 
\[
{\sf R} \eqbydef A\rhd B \rightarrow \neg (A \rhd \neg C)\rhd B \wedge \Box C.
\]
It turns out to be the same frame condition as for \principel{P_0}
(see \cite{joo98}).

\begin{lemma}
$F\models \principel{R} \Leftrightarrow [xRyRzS_xuRv\rightarrow zS_yv]$
\end{lemma}

\begin{proof}
``$\Leftarrow$'' Suppose that at some world $x\Vdash A\rhd B$.
We are to show $x\Vdash \neg (A \rhd \neg C)\rhd B \wedge \Box C$. Thus,
if $xRy\Vdash \neg (A \rhd \neg C)$ we need to go via an $S_x$ to a 
$u$ with $u\vdash B \wedge \Box C$.

As $y\Vdash \neg (A \rhd \neg C)$, we can find $z$ with $yRz\Vdash A$.
Now, by $x\Vdash A\rhd B$, we can find $u$ with $yS_xu\Vdash B$. We shall now see that
$u \Vdash B \wedge \Box C$. 
For, if $uRv$, then by our assumption, $zS_yv$, and by 
$y\Vdash \neg (A \rhd \neg C)$, we must have $v\vdash C$. Thus, 
$u\Vdash B\wedge \Box C$ and clearly $yS_xu$.

``$\Rightarrow$'' We suppose that \principel{R} holds. Now we consider 
arbitrary $a,b,c,d$ and $e$ with $aRbRcS_adRe$. For propositional 
variables $p,q$ and $r$ we define a valuation $\Vdash$ as follows.
\[
\begin{array}{lll}
x\Vdash p & :\Leftrightarrow & x=c \\
x\Vdash q& :\Leftrightarrow & x=d \\
x\Vdash r& :\Leftrightarrow & cS_bx \\
\end{array}
\]
Clearly, $a\Vdash p\rhd q$ and $b\Vdash \neg (p\rhd \neg r)$.
By \principel{R} we conclude $a\Vdash \neg (p\rhd \neg r)\rhd q \wedge \Box r$.
Thus, $d\Vdash q\wedge \Box r$ which implies $cS_be$.
\end{proof}

As $\sf P_0$ and $\sf R$ have the same frame condition we can never find an $\sf R$-frame on which $\sf P_0$ fails to hold. However, the following elementary lemma tells us that it is not necessary to work with frames.

\begin{lemma}\label{lemm:modelsound}
Let $M$ be a model such that $\forall\,  w{\in}M \ $ 
$w \Vdash \extil{X}$ then $\extil{X} \vdash \varphi \Rightarrow M\models \varphi$.
\end{lemma}

\begin{proof}
By induction on the derivation of $\varphi$.
\end{proof}

We can now prove the main theorem of this section.

\begin{theorem}
$\ilwstarpenul \nvdash \principel{R}$
\end{theorem}

\begin{figure}
\begin{center}
\input{countermodel.xfig.pstex_t}
\end{center}
\caption{\ilwstarpenul is incomplete}\label{figu:incompleteilwp}
\end{figure}

\begin{proof}
%\begin{center}
%\begin{figure}
%\resizebox{6cm}{!}{\input{goris/countermodel.xfig.pstex_t}}
%\caption{\ilwstarpenul is incomplete}\label{figu:incompleteilwp}
%\end{figure}
%\end{center}
%
%      Volgens mij doet die resizebox maar rare dingen hoor Gorris!
%
%      Nee sorry Gorry, het was in de proof environment raar!

We consider the model $M$ from Figure \ref{figu:incompleteilwp}
and shall see that
$M \models \ilwstarpenul$ but $M,a \not \Vdash \principel{R}$.
By Lemma \ref{lemm:modelsound} we conclude that 
$\ilwstarpenul \nvdash \principel{R}$.

As $M$ satisfies the frame condition for \principel{W^*}, it is clear
that $M\models \principel{W^*}$. We shall now see that
$M\models A \rhd \Diamond B \rightarrow \Box (A\rhd B)$ for any 
formulas $A$ and $B$.

A formula $\Box (A \rhd B)$ can only be false at some world with at least
two successors. Thus, in $M$, we only need to consider the point $a$.
So, supppose $A\rhd \Diamond B$. For which $x$ with $aRx$ can we have
$x\Vdash A$? 

As we have to be able to go via an $S_x$-transition to 
a world where $\Diamond B$ holds, the only candidates for $x$ are 
$b,c$ and $d$. But clearly, $c$ and $f$ make true the same modal formulas.
From $f$ it is impossible to go to a world where $\Diamond B$ holds.

Thus, if $a\Vdash A\rhd \Diamond B$, the $A$ can only hold at $b$ or at 
$d$. But this automatically implies that $a\Vdash \Box (A\rhd B)$
and $M\models \principel{P_0}$.

It is not hard to see that $a\not \Vdash \principel{R}$.
Clearly, $a\Vdash p\rhd q$ and $b\Vdash \neg (p\rhd \neg r)$.
However, $d\not \Vdash q \wedge \Box r$ and thus
$a\not \Vdash \neg (p\rhd \neg r)\rhd q \wedge \Box r$.
\end{proof}

The following lemma tells us that \ilr is a proper extension 
of \extil{M_0P_0}.

\begin{lemma}
$\ilr \vdash \principel{M_0}, \principel{P_0}$
\end{lemma}

\begin{proof}
As 
$\il \vdash \Diamond A \wedge \Box C \rightarrow \neg (A \rhd \neg C)$
we get that 
$A \rhd B \rightarrow \Diamond A \wedge \Box C \rhd \neg (A \rhd \neg C)$
and \principel{M_0} follows from \principel{R}.

The principle \principel{P_0} follows directly from \principel{R} by 
taking $C=\neg B$.
\end{proof}

We can consider the principle \principel{R^*} that can be seen, in 
a sense, as the union of \principel{W} and \principel{R}.
\[
\principel{R^*}:\ \ \ \ A\rhd B \rightarrow \neg (A \rhd \neg C)\rhd 
B \wedge \Box C \wedge \Box \neg A
\]
\begin{lemma}\label{lemm:homo2}
$\extil{RW}=\ilrstar$
\end{lemma}

\begin{proof}
$\supseteq: A\rhd B \rightarrow A \rhd B \wedge \Box \neg A
\rightarrow \neg (A \rhd \neg C) \rhd B \wedge \Box C \wedge \Box \neg A$.

$\subseteq: A \rhd B \rightarrow \neg (A \rhd \neg C) \rhd B \wedge \Box C \wedge \Box \neg
A \rhd B \wedge \Box C$; and if
$A \rhd B$, then $A \rhd B \rhd ((B \wedge \Box \neg A)\vee \Diamond A )\rhd
B \wedge \Box \neg A$, as 
$A\rhd B \rightarrow \neg ( A\rhd \bot ) \rhd B \wedge \Box \top \wedge \Box \neg A$.
\end{proof}

\section{Arithmetical soundness of \principel{R}}\label{subs:risvetsound}
%\documentclass{article}
%\input{stijlvijl}
%\begin{document}

%\section{Remarks on the interpretability logic of all reasonable arithmetical theories}

Let us first recall Definition \ref{defi:ilall}, that is, the 
definition of the interpretability logic of all reasonable arithmetical
theories. We shall write \intl{All}. We defined \intl{All} to 
be the set of modal formulas that are interpretability principles
in any reasonable arithmetical theory. That is, the set of $\varphi$ for which
\[
\forall T \forall *\ T\vdash \varphi^*.
\]

In \cite{viss:prel88} \intl{All} was conjectured to be \ilw. In 
\cite{Vis91} this conjecture was falsified and strengthened to 
a new conjecture. There it was conjectured that \ilwstar, which is a 
proper extension of \ilw, is \intl{All}.

In 
\cite{joo98} 
%and in 
%\cite{joo:prol00} 
it was proved that the logic 
\extil{W^*P_0} is a proper extension of
\ilwstar, and that \extil{W^*P_0} is a subsystem of \intl{All}. This 
falsified the conjecture from \cite{Vis91}.
In \cite{joo98} it is also conjectured that \extil{W^*P_0} is not 
the same as \intl{All}.

In \cite{joo:prol00}  it is conjectured that
\extil{W^*P_0}=\intl{All}. As we will see below we have that the logic 
\extil{RW} is a subsystem of \intl{All} and a proper extension of \extil{W^*P_0}.
This rejects the conjecture pronounced in \cite{joo:prol00}. With all this 
conjecturing and refuting of conjectures we are rather hesitant in proposing
as a new conjecture that $\extil{RW} = \intl{All}$.\footnote{In fact, we have strong evidence that actually $\extil{RW} \neq \intl{All}$.}
%\footnote{Although we do have a certain hope.} 

%\subsection{Arithmetical Soundness of ${\sf R}$}

We shall now give the proof that the new principle \principel{R}
%\[
%{\sf R} \eqbydef A\rhd B \rightarrow \neg (A \rhd C)\rhd B \wedge \Box\neg C
%\]
is arithmetically valid in all reasonable theories.
In the proof we shall employ some well-known arithmetical facts. We will 
now first briefly summarize these facts.

\begin{definition}
A definable $T$-cut is a formula $I(x)$ with one free variable, such that
$T\vdash I(0) \wedge \forall x\  (I(x)\rightarrow I(x+1))$. ${\sf Cut}
(\cdot )$ will denote  the function that assigns to the code of a 
formula $\varphi$, the code of the formula expressing that
$\varphi$ is a cut, that is, 
$\varphi (0) \wedge \forall x\ (\varphi (x) \rightarrow \varphi (x+1))$
(whenever $\varphi$ is of the right format). 
\end{definition}

The function
${\sf Cut}(\cdot )$ is a very easy function. It is certainly 
provably total in
${\sf I}\Delta_{0} + \Omega_1$. In this section we shall denote 
the translation of a formula $\varphi$ under an interpretation $j$
by $j(\varphi )$. If $I$ is a cut and $\varphi$ a formula, we shall 
by $\varphi^I$ denote the formula $\varphi$, where all the
quantifiers in $\varphi$ are relativized to the cut $I$.
The following lemma is mentioned (as an exercise) in \cite{pudl85}.
It is central to many arguments in the field of 
formalized interpretability.

\begin{lemma}[Pudl\'ak]\label{lemm:pudlak}
There exists a function $f$, provably total in ${\sf I}\Delta_{0} + \Omega_1$,
such that for any reasonable arithmetical theory $T$, the following holds.
\[
T\vdash j : \alpha \rhd_T \beta \rightarrow
[\Box_T {\sf Cut}(f(j)) \wedge \forall \, \sigma {\in}\Sigma_1 ! \ 
j : \alpha \wedge \sigma^{f(j)} \rhd_T \beta \wedge \sigma]
\]
\end{lemma}

Another fact from arithmetic that we shall need, is that we
can perform the Henkin construction using numbers from a cut.
This is expressed by the following lemma.

\begin{lemma}\label{lemm:sneehenkin}
For any reasonable arithmetical theory $T$ we have that
\[
T \vdash \Box_T ({\sf Cut}(I))\rightarrow \Diamond_T^I\alpha \rhd_T
\alpha .
\]
\end{lemma}

These two lemmas are enough to prove the arithmetical soundness of the 
principle $\sf R$. Note that the $j$, $I$, $\alpha$ and $\beta$ in
Lemma \ref{lemm:pudlak} en \ref{lemm:sneehenkin} are parameters and 
hence could be universally quantified within the theory.

\begin{theorem}[Soundness of ${\sf R}$]
For any reasonable arithmetical theory $T$ we have the following.
\[
T \vdash \alpha \rhd \beta \rightarrow 
\neg (\alpha \rhd \neg\gamma) \rhd \beta \wedge \Box\gamma
\]
\end{theorem}

\begin{proof}
Let $f$ denote the function from Lemma \ref{lemm:pudlak}. To prove our
theorem, we reason in $T$ and assume $\alpha \rhd \beta$. Thus, for 
some interpretation $j$ we have $j : \alpha \rhd \beta$.
We now claim that
\[
\neg (\alpha \rhd \neg\gamma)\rightarrow
\Diamond ( \alpha \wedge \Box^{f(j)}\gamma).  \ \ \ \ \ \ (+)
\]
%\end{proof}
Let us first see that this claim, indeed entails the result.
\[
\begin{array}{rcl}
\neg (\alpha \rhd \neg\gamma) 
& \rhd 
& \mbox{ By $(+)$}\\
\Diamond ( \alpha \wedge \Box^{f(j)}\gamma)
& \rhd 
& \mbox{ By $\sf J5$}\\
\alpha \wedge \Box^{f(j)}\gamma
& \rhd 
& \mbox{ By Lemma \ref{lemm:pudlak} and $j : \alpha \rhd \beta$}\\
\beta \wedge \Box\gamma
\end{array}
\]
Thus, now we only need to prove the claim. We will prove $(+)$ by 
showing the logical equivalent
\[
\Box (\alpha \rightarrow \Diamond^{f(j)} \neg\gamma)
\rightarrow \alpha \rhd \neg\gamma. \ \ \ \ \  (++)
\]
We reason as follows.
\[
\begin{array}{rcl}
\Box (\alpha \rightarrow \Diamond^{f(j)}\neg \gamma) 
& \rightarrow 
& \mbox{ By $\sf J1$}\\
\alpha \rhd \Diamond^{f(j)} \neg\gamma 
& \rightarrow 
& \mbox{ By Lemma \ref{lemm:sneehenkin} and $\sf J2$}\\
\alpha \rhd \neg\gamma
\end{array}
\]
\end{proof}

\bibliographystyle{plain}
%\begin{thebibliography}{100}

%\end{thebibliography}
%\bibliography{goris/levref}

%\begin{acknowledgements}
%We dedicate this series of three papers to Dick de Jongh. Dick supervised the Masters theses of both authors and suggested to study a step-by-step construction method to obtain modal completeness results. 
%
%Furthermore, we thank Lev Beklemishev, Marta Bilkova, Rosalie Iemhoff, Pavel Pudl\'ak, Volodya Shavrukov and Albert Visser for questions, discussions and suggestions.
%
%We are also very thankful for the flexibility of the referee and the editors of this journal. At first we submitted the three current papers as one huge paper. Later we thought it would be better to split up the paper in three parts to which the editors and the referee most kindly did agree. Moreover, in between the date of first submission and the final submission there was quite some delay due to personal issues of the authors. Therefore, we also thank the editors and the referee for their patience. 
%\end{acknowledgements}

\end{document}